\crefname{subsection}{subsection}{subsections}
\newtheorem{theorem}{Theorem}[section]
\newtheorem{lemma}[theorem]{Lemma}
\newcommand\eps{\varepsilon}
\newcommand{\E}{\mathbb E}
\newcommand{\Prob}{\mathbb{P}}
\newcommand{\Bin}{\mathrm{Bin}}
\newcommand{\Nn}{{\mathbb N}}
\newcommand{\scr}{\mathcal}
\newcommand{\mb}{\mathbb}
\theoremstyle{definition}
\newtheorem{remark}[theorem]{Remark}
\newcommand{\bigo}{\mathcal{O}}
\newcommand{\htau}{\hat{\tau}}
\title[Power of $k$ Choices in the Semi-Random Graph Process]{Power of $k$ Choices in the Semi-Random Graph Process}
\author{Pawe\l{} Pra\l{}at}
\address{Department of Mathematics, Toronto Metropolitan University, Toronto, Canada}
\email{pralat@torontomu.ca}
\author{Harjas Singh}
\address{Department of Mathematics, Toronto Metropolitan University, Toronto, Canada}
\email{harjas.singh@torontomu.ca}
\date{}
\begin{document}

\maketitle

\begin{abstract}
The semi-random graph process is a single player game in which the player is initially presented an empty graph on $n$ vertices. In each round, a vertex $u$ is presented to the player independently and uniformly at random. The player then adaptively selects a vertex $v$, and adds the edge $uv$ to the graph. For a fixed monotone graph property, the objective of the player is to force the graph to satisfy this property with high probability in as few rounds as possible.

In this paper, we introduce a natural generalization of this game in which $k$ random vertices $u_1, \ldots, u_k$ are presented to the player in each round. She needs to select one of the presented vertices and connect to any vertex she wants. We focus on the following three monotone properties: minimum degree at least $\ell$, the existence of a perfect matching, and the existence of a Hamiltonian cycle.
\end{abstract}

\section{Introduction and Main Results}
\subsection{Definitions} 

In this paper, we consider a natural generalization of the \textbf{semi-random graph process} suggested by Peleg Michaeli, introduced formally in~\cite{beneliezer2019semirandom}, and studied recently in~\cite{beneliezer2020fast,gao2020hamilton,gao2022perfect,behague2022,Burova2022,macrury_2022,gao2022fully,frieze2022hamilton,koerts2022k,gamarnik2023cliques} that can be viewed as a ``one player game''. The original process starts from $G_0$, the empty graph on the vertex set $[n]:=\{1,\ldots,n\}$ where $n \ge 1$. In each \textbf{step} $t$, a vertex $u_t$ is chosen uniformly at random from $[n]$. Then, the player (who is aware of the graph $G_t$ and the vertex $u_t$) must select a vertex $v_t$ and add the edge $u_tv_t$ to $G_t$ to form $G_{t+1}$. The goal of the player is to build a (multi)graph satisfying a given property $\scr{P}$ as quickly as possible. It is convenient to refer to $u_t$ as a {\bf square}, and $v_t$ as a {\bf circle} so every edge in $G_t$ joins a square with a circle. We say that $v_t$ is paired to $u_t$ in step $t$. Moreover, we say that a vertex $x \in [n]$ is \textbf{covered} by the square $u_t$ arriving at round $t$, provided $u_t = x$. The analogous definition extends to the circle $v_t$. Equivalently, we may view $G_t$ as a directed graph where each arc  directs from $u_t$ to $v_t$, and thus we may use $(u_t,v_t)$ to denote the edge added in step $t$. For this paper, it is easier to consider squares and circles for counting arguments.

We generalize the process as follows. Let $k \in \Nn$. In each step $t$, $k$ vertices $u^1_t, \ldots, u^k_t$ are chosen independently and uniformly at random from $[n]$. For simplicity, we allow repetitions  but, of course there will not be too many of them. Then, the player must select one of them (that is, select $i_t \in [k]$ and fix $u_t=u^{i_t}_t$), select a vertex $v_t$, and add the edge $u_t v_t$ to $G_t$ to form $G_{t+1}$. The objective is the same as before, namely, to achieve a property $\scr{P}$ as quickly as possible. We will refer to this game as the \textbf{$k$-semi-random graph process} or simply the \textbf{$k$-process}. Clearly, it is a generalization, since for $k=1$ we recover the original game. Moreover, if $k_1 > k_2 \ge 1$, then the $k_1$-process is as easy for the player as the $k_2$-process since additional $k_1-k_2$ squares may be simply ignored when making choices. However, it is often the case that more choices provide substantially more power to the player. For more details, we refer the reader to a general survey~\cite{survey_load_balancing} and the first paper introducing this powerful and fundamental idea~\cite{load_balancing}.

\medskip

A \textbf{strategy} $\scr{S}$ to play against the $k$-process is defined by specifying for each $n \ge 1$, a sequence of functions $(f_{t})_{t=1}^{\infty}$, where for each $t \in \mb{N}$, $f_t(u_1,v_1,\ldots, u_{t-1},v_{t-1},u^1_t, \ldots, u^k_t)$ is a distribution over $[k] \times [n]$ which depends on the vertices $u^1_t, \ldots, u^k_t$, and the history of the process up until step $t-1$. Then, $i_t \in [k]$ and $v_t$ is chosen according to this distribution. Observe that this means that the player needs to select her strategy (possibly randomized) in advance, before the game actually starts. If $f_t$ is an atomic distribution, then the pair $(i_t,v_t)$ is determined by $u_1,v_1, \ldots ,u_{t-1},v_{t-1},u^1_t, \ldots, u^k_t$. We then denote $(G_{i}^{\scr{S}}(n))_{i=0}^{t}$ as the sequence of random (multi)graphs obtained by following the strategy $\scr{S}$ for $t$ rounds; where we shorten $G_{t}^{\scr{S}}(n)$ to $G_{t}(n)$ or $G_t$ when clear. 

Suppose $\scr{P}$ is a monotonely increasing property. Given a strategy $\scr{S}$ to play against the $k$-process and a constant $0<q<1$, let $\htau_{\scr{P}}(\scr{S},q,n,k)$ be the minimum $t \ge 0$ for which $\mb{P}[G_{t}^{\scr{S}}(n) \in \scr{P}] \ge q$,
where $\htau_{\scr{P}}(\scr{S},q,n,k):= \infty$ if no such $t$ exists. Define
\[
\htau_{\scr{P}}(q,n,k) = \inf_{ \scr{S}} \htau_{\scr{P}}( \scr{S},q,n,k),
\]
where the infimum is over all strategies on $[k] \times [n]$. Observe that for each $n \ge 1$, if $0 \le q_{1} \le q_{2} \le 1$, then $\htau_{\scr{P}}(q_1,n,k) \le \htau_{\scr{P}}(q_2,n,k) $ as $\scr{P}$ is increasing. Thus, the function $q\rightarrow \limsup_{n\to\infty} \htau_{\scr{P}}(q,n,k)$ is non-decreasing, and so the limit
\[
\tau_{\scr{P}}(k):=\lim_{q\to 1^-}\limsup_{n\to\infty} \frac{\htau_{\scr{P}}(q,n,k) }{n},
\]
is guaranteed to exist. The goal is typically to compute upper and lower bounds on $\tau_{\scr{P}}(k)$ for various properties $\scr{P}$. Note that we normalized $\htau_{\scr{P}}(q,n,k)$ by $n$ above since the properties investigated in this paper need a linear number of rounds to be achieved. Other properties might require different scaling. For example, creating a fixed graph $H$ requires $o(n)$ rounds a.a.s.~\cite{beneliezer2019semirandom,behague2022}.

\subsection{Main Results} 

In this paper, we investigate the following three monotone properties: minimum degree at least $\ell$ (Section~\ref{sec:degree}), the existence of a perfect matching (Section~\ref{sec:matching}), and the existence of a Hamiltonian cycle (Section~\ref{sec:cycles}). 

For minimum degree at least $\ell$, we first show that a greedy strategy of choosing a minimum-degree vertex from the $k$ offered, and joining it to a minimum-degree vertex, is optimal; the proof adapts one from~\cite{beneliezer2019semirandom}. For fixed $k$ and $\ell$, the time to achieve minimum degree $\ell$ follows from the differential equation method.

For perfect matching, an optimal algorithm is not clear so we only provide some lower and upper bounds. The lower bound is the time to create minimum degree 1 graph, drawn from the earlier part. For fixed $k$, the upper bound comes from the differential-equation analysis of an algorithm that is a straightforward extension of that in~\cite{gao2022perfect}. 

Finally, for Hamiltonicity, the picture is similar to that of perfect matching. The lower bound is that for minimum degree 2, drawn from the earlier part. For fixed $k$, the upper bound comes again from the differential-equation analysis of a natural extension of the algorithm from~\cite{frieze2022hamilton}.

The computations presented in the paper (see Tables~\ref{tab:min_degree}, \ref{tab:perfect_matching}, and~\ref{tab:cycles}) were performed by using Maple~\cite{bernardin2016maple}. The worksheets can be found on-line\footnote{\url{https://math.torontomu.ca/~pralat/}}.

\section{Preliminaries}

\subsection{Notation}

The results presented in this paper are asymptotic by nature. We say that some property $\scr{P}$ holds \emph{asymptotically almost surely} (or a.a.s.) if the probability that the $k$-process has this property (after possibly applying some given strategy) tends to $1$ as $n$ goes to infinity. 
Given two functions $f=f(n)$ and $g=g(n)$, we will write $f(n)=\bigo(g(n))$ if there exists an absolute constant $c > 0$ such that $|f(n)| \leq c|g(n)|$ for all $n$, $f(n)=\Omega(g(n))$ if $g(n)=\bigo(f(n))$, $f(n)=\Theta(g(n))$ if $f(n)=\bigo(g(n))$ and $f(n)=\Omega(g(n))$, and we write $f(n)=o(g(n))$ or $f(n) \ll g(n)$ if $\lim_{n\to\infty} f(n)/g(n)=0$. In addition, we write $f(n) \gg g(n)$ if $g(n)=o(f(n))$ and we write $f(n) \sim g(n)$ if $f(n)=(1+o(1))g(n)$, that is, $\lim_{n\to\infty} f(n)/g(n)=1$.

We will use $\log n$ to denote a natural logarithm of $n$. As mentioned earlier, for a given $n \in \Nn := \{1, 2, \ldots \}$, we will use $[n]$ to denote the set consisting of the first $n$ natural numbers, that is, $[n] := \{1, 2, \ldots, n\}$. Finally, as typical in the field of random graphs, for expressions that clearly have to be an integer, we round up or down but do not specify which: the choice of which does not affect the argument.

\subsection{Concentration Tools}\label{sec:concentration}

Let us first state a few specific instances of Chernoff's bound that we will find useful. Let $X \in \textrm{Bin}(n,p)$ be a random variable distributed according to a Binomial distribution with parameters $n$ and $p$. Then, a consequence of \emph{Chernoff's bound} (see e.g.~\cite[Theorem~2.1]{JLR}) is that for any $t \ge 0$ we have
\begin{eqnarray}
\Prob( X \ge \E X + t ) &\le& \exp \left( - \frac {t^2}{2 (\E X + t/3)} \right)  \label{chern1} \\
\Prob( X \le \E X - t ) &\le& \exp \left( - \frac {t^2}{2 \E X} \right).\label{chern}
\end{eqnarray}

Moreover, let us mention that the bound holds in a more general setting as well, that is, for $X=\sum_{i=1}^n X_i$ where $(X_i)_{1\le i\le n}$ are independent variables and for every $i \in [n]$ we have $X_i \in \textrm{Bernoulli}(p_i)$ with (possibly) different $p_i$-s (again, see~e.g.~\cite{JLR} for more details). Finally, it is well-known that the Chernoff bound also applies to negatively correlated Bernoulli random variables~\cite{dubhashi1998balls}.

\subsection{The Differential Equation Method}

In this section, we provide a self-contained \textit{non-asymptotic} statement of the differential equation method which we will use for each property we investigate. The statement combines~\cite[Theorem $2$]{warnke2019wormalds}, and its extension~\cite[Lemma $9$]{warnke2019wormalds}, in a form convenient for our purposes, where we modify the notation of~\cite{warnke2019wormalds} slightly. In particular, we rewrite~\cite[Lemma $9$]{warnke2019wormalds} in a less general form in terms of a stopping time $T$. We need only check the `Boundedness Hypothesis' (see below) for $0 \le t \le T$, which is exactly the setting in our proofs.

Suppose we are given integers $a,n \ge 1$, a bounded domain $\scr{D} \subseteq \mb{R}^{a+1}$, and functions $(F_k)_{1 \le k \le a}$ where each $F_k: \scr{D} \to \mb{R}$ is $L$-Lipschitz-continuous on $\scr{D}$ for $L \ge 0$. Moreover, suppose that $R \in [1, \infty)$ and $S \in (0, \infty)$ are \textit{any} constants which satisfy $\max_{1 \le k \le a} |F_{k}(x)| \le R$ for all $x=(s,y_1,\ldots ,y_{a})\in \scr{D}$ and $0 \le s \le S$.

\begin{theorem}[Differential Equation Method, \cite{warnke2019wormalds}] \label{thm:differential_equation_method}
Suppose we are given $\sigma$-fields $\scr{F}_{0}  \subseteq \scr{F}_{1} \subseteq \cdots$, and for each $t \ge 0$, random variables $(Y_{k}(t))_{1 \le k \le a}$ which are $\scr{F}_t$-measurable. Define $T_{\scr{D}}$ to be the minimum $t \ge 0$ such that
\[
 (t/n, Y_{1}(t)/n, \ldots , Y_{a}(t)/n) \notin \scr{D}.
\]
Let $T \ge 0$ be an (arbitrary) stopping time\footnote{The stopping time $T\ge 0$ is \textbf{adapted} to $(\scr{F}_t)_{t \ge 0}$, provided the event $\{T = t\}$ is $\scr{F}_t$-measurable for each $t \ge 0$.} adapted to $(\scr{F}_t)_{t \ge 0}$, and assume that the following conditions hold for $\delta, \beta, \gamma \ge 0$ and $\lambda \ge \delta \min\{S, L^{-1}\} + R/n$:
\begin{enumerate}
    \item[(i)] The `Initial Condition': For some $(0,\hat{y}_1,\ldots ,\hat{y}_a) \in \scr{D}$, \label{enum:initial_conditions}
    \[
    \max_{1 \le k \le a} |Y_{k}(0) - \hat{y}_k n| \le \lambda n.
    \] 
    \item[(ii)] The `Trend Hypothesis': For each $k \in [a]$ and each  $t \le \min\{ T, T_{\scr{D}} -1\}$, \label{enum:trend_hypothesis}
    $$|\mb{E}[ Y_{k}(t+1) - Y_{k}(t) \mid \scr{F}_t] - F_{k}(t/n,Y_{1}(t)/n,\ldots ,Y_{a}(t)/n)| \le \delta.$$
    \item[(iii)] The `Boundedness Hypothesis': With probability $1 - \gamma$, \label{enum:boundedness_hypothesis}
    $$|Y_{k}(t+1) -  Y_{k}(t)| \le \beta,$$
    for each $k \in [a]$ and each $t \le \min\{ T, T_{\scr{D}} -1\}$.
\end{enumerate}
Then, with probability at least $1 - 2a \exp\left(\frac{-n \lambda^2}{8 S \beta^2}\right) - \gamma$, we have that
\begin{equation}
    \max_{0 \le t \le \min\{T, \sigma n\}} \max_{1 \le k \le a} |Y_{k}(t) -y_{k}(t/n) n| < 3 \lambda \exp(L S)n,
\end{equation}
where $(y_{k}(s))_{1 \le k \le a}$ is the unique solution to the system of differential equations
\begin{equation} \label{eqn:general_de_system}
    y_{k}'(s) = F_{k}(s, y_{1}(s),\ldots ,y_{a}(s)) \quad \mbox{with $y_{k}(0) = \hat{y}_k$ for $1 \le k \le a$,}
\end{equation}
and $\sigma = \sigma(\hat{y}_1,\ldots ,\hat{y}_a) \in [0,S]$ is any choice of $\sigma \ge 0$ with the property that $(s,y_{1}(s),\ldots, y_{a}(s))$ has $\ell^{\infty}$-distance at least $3 \lambda \exp(LS)$ from the boundary of $\scr{D}$ for all $s \in [0, \sigma)$.
\begin{remark}
Standard results for differential equations guarantee that \eqref{eqn:general_de_system} has a unique solution $(y_{k}(s))_{1\le k \le a}$ which extends arbitrarily close to the boundary of $\scr{D}$. 
\end{remark}
\end{theorem}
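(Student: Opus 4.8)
The plan is to deduce the statement directly from the two results of~\cite{warnke2019wormalds} it is advertised to combine, namely~\cite[Theorem~2]{warnke2019wormalds} and its stopping-time extension~\cite[Lemma~9]{warnke2019wormalds}. No new probabilistic input is needed: the task is entirely one of translating notation and checking that the stopping-time bookkeeping lines up with the way we intend to invoke the result.

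First I would fix the dictionary between the two formulations. The data $a$, $n$, the bounded domain $\scr{D}\subseteq\mb{R}^{a+1}$, the functions $(F_k)_{1\le k\le a}$, the Lipschitz constant $L$, and the bounds $R$ and $S$ play exactly the roles of the corresponding objects in~\cite{warnke2019wormalds}; the filtration $(\scr{F}_t)_{t\ge0}$ and the $\scr{F}_t$-measurable variables $(Y_k(t))_{1\le k\le a}$ are the tracked quantities, and $T_{\scr{D}}$ is Warnke's first-exit time of the scaled trajectory $(t/n,Y_1(t)/n,\ldots,Y_a(t)/n)$ from $\scr{D}$. Conditions (i), (ii), (iii) are then literally Warnke's `Initial Condition', `Trend Hypothesis', and `Boundedness Hypothesis', with the single weakening that (ii) and (iii) are only demanded on the random range $0\le t\le\min\{T,T_{\scr{D}}-1\}$ rather than deterministically for all $t$. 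This is precisely the relaxation that~\cite[Lemma~9]{warnke2019wormalds} is built to handle: there the deterministic range of $t$ may be replaced by $t$ below an arbitrary stopping time adapted to $(\scr{F}_t)_{t\ge0}$. I would therefore apply that lemma with the stopping time $T^{\ast}:=\min\{T,T_{\scr{D}}\}$, which is adapted since it is the minimum of an adapted stopping time and the exit time of a process adapted to $(\scr{F}_t)_{t\ge0}$; then the hypotheses need only be verified for $t\le T^{\ast}-1=\min\{T,T_{\scr{D}}\}-1$, which is exactly the range in our conditions.

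With the hypotheses thus in place, the conclusion is the conclusion of~\cite[Theorem~2]{warnke2019wormalds} as sharpened by~\cite[Lemma~9]{warnke2019wormalds}, read off term by term: the admissible parameter constraint $\lambda\ge\delta\min\{S,L^{-1}\}+R/n$ is the one under which Warnke's error term $3\lambda\exp(LS)n$ is valid; the failure probability $2a\exp(-n\lambda^2/(8S\beta^2))+\gamma$ splits as the martingale-concentration contribution (an Azuma--Hoeffding estimate applied to the ``frozen'' process, which is where $\beta$, the horizon $S$, and the slack $\lambda$ enter) plus the probability $\gamma$ that the boundedness event fails; and the comparison window $\max_{0\le t\le\min\{T,\sigma n\}}$ together with the requirement that the ODE trajectory $(s,y_1(s),\ldots,y_a(s))$ stay at $\ell^{\infty}$-distance at least $3\lambda\exp(LS)$ from $\partial\scr{D}$ for $s\in[0,\sigma)$ is exactly Warnke's condition guaranteeing the scaled trajectory has not exited $\scr{D}$ before time $\sigma n$, which is what lets $T_{\scr{D}}$ be dropped from the final bound. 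The Remark is then just the standard Picard--Lindel\"of existence and uniqueness theory: $L$-Lipschitz continuity of $(F_k)$ on $\scr{D}$ yields a unique solution of~\eqref{eqn:general_de_system} through the interior point $(0,\hat y_1,\ldots,\hat y_a)$, and a maximal-interval continuation argument shows it extends until it approaches $\partial\scr{D}$.

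I expect the only genuine care-point to be the stopping-time bookkeeping in the second paragraph: making sure that ``verify (ii) and (iii) only up to $\min\{T,T_{\scr{D}}-1\}$'' is truly covered by the native form of~\cite[Lemma~9]{warnke2019wormalds} (``up to an arbitrary adapted stopping time''), i.e. that intersecting $T$ with the exit time $T_{\scr{D}}$ preserves adaptedness and that the indexing $T_{\scr{D}}-1$ versus $T^{\ast}-1$ causes no off-by-one trouble. Everything else is a faithful quotation with relabelled symbols.
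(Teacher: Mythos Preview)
Your proposal is correct and matches the paper's own treatment: the paper does not prove this theorem at all, but simply states it as a reformulation of~\cite[Theorem~2]{warnke2019wormalds} together with its stopping-time extension~\cite[Lemma~9]{warnke2019wormalds}, exactly as you describe. Your dictionary between the two formulations and your remarks on the stopping-time bookkeeping go somewhat beyond what the paper spells out, but the underlying approach is identical.
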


\section{Minimum Degree at Least $\ell$}\label{sec:degree}

Let us fix a natural number $\ell$. Our goal is to investigate how long does it take for the $k$-process to create a graph with minimum degree at least $\ell$. This problem was considered in~\cite{beneliezer2019semirandom} for the original semi-random process ($k=1$). In this paper, we investigate it for the $k$-process for any value of $k$.

Let $\scr{P}_\ell$ be the property that a graph has a minimum degree at least $\ell$. In order to establish the value of $\tau_{\scr{P}_\ell}(k)$, we need to do two things: investigate which strategy is optimal (we do it in Subsection~\ref{sec:min_degree_optimal}) and then analyze the optimal strategy (we do it in Subsection~\ref{sec:min_degree_analysis}). One consequence of our results is Table~\ref{tab:min_degree} which consists of numerical values of $\tau_{\scr{P}_\ell}(k)$ for a grid of parameters $(k,\ell)$ with $1 \le k,\ell \le 5$. It follows immediately from the definition of $\tau_{\scr{P}_\ell}(k)$ that  it is a non-decreasing function with respect to $\ell$ but a non-increasing one with respect to $k$. 

Finally, we note that for large values of $k$ (and any fixed value of $\ell$), typically some square lands on a vertex with minimum degree and so the degree distribution is well balanced during the whole process. As a result, the total number of rounds is close to the trivial lower bound of $\ell n/2$. In other words, $\tau_{\scr{P}_\ell}(k) = \ell/2 + o_k(1)$. Similarly, for large values of $\ell$ (and any fixed value of $k$), because of the law of large numbers, each vertex receives more or less the same number of squares. As before, the degree distribution is well balanced and as a consequence, $\tau_{\scr{P}_\ell}(k) = \ell/2 + o_\ell(1)$. We investigate both of these properties in Subsection~\ref{sec:min_degree_large}.

\begin{table}[htp]
\caption{Minimum Degree at Least $\ell$---numerical values of $\tau_{\scr{P}_\ell}(k)$ for a grid of parameters $(k,\ell)$ with $1 \le k,\ell \le 5$.}
\begin{center}
\begin{tabular}{|c|c|c|c|c|c|c|}
\hline
& $k=1$ & $k=2$ & $k=3$ & $k=4$ & $k=5$ \\
\hline
$\ell=1$ & 0.69315 & 0.62323 & 0.59072 & 0.57183 & 0.55947 \\
$\ell=2$ & 1.21974 & 1.12498 & 1.09081 & 1.07184 & 1.05947 \\
$\ell=3$ & 1.73164 & 1.62508 & 1.59081 & 1.57184 & 1.55947 \\
$\ell=4$ & 2.23812 & 2.12508 & 2.09081 & 2.07184 & 2.05947 \\
$\ell=5$ & 2.74200 & 2.62508 & 2.59081 & 2.57184 & 2.55947 \\
\hline
\end{tabular}
\end{center}
\label{tab:min_degree}
\end{table}

\subsection{Optimal Strategy}\label{sec:min_degree_optimal}

In this subsection, we show that the following greedy strategy is an optimal strategy. In this strategy, in each round $t$ of the process the player selects a square that lands on a vertex with the smallest degree, that is, she selects $i_t$ such that $\deg_{G_t}(u_t^i) \ge \deg_{G_t}(u_t^{i_t})$ for any $i \in [k]$; if there is more than one such square to choose from, the decision which one to select is made arbitrarily. Then, the player puts a circle on a vertex with minimum degree; again, if there is more than one such vertex to choose from, the decision is made arbitrarily. Let us denote this strategy as $\scr{S}_0$.

Recall that, in order to make sure the $k$-process is well defined, we allowed it to create multi-graphs (that is, loops and parallel edges are allowed). It simplifies the definition of the strategy $\scr{S}_0$ above and our proofs below but one can easily adjust the argument to stay with the family of simple graphs.

Let us fix $k$ and $\ell$. For a given strategy $\scr{S}$, let $H(\scr{S})$ be the hitting time for the property $\scr{P}_\ell$, that is, $H(\scr{S})$ is the random variable equal to the number of rounds required for $\scr{S}$ to achieve the property $\scr{P}_\ell$. We say that a strategy $\scr{S}$ \textbf{dominates} a strategy $\scr{S}'$ if the random variable $H(\scr{S}')$ is dominated by the random variable $H(\scr{S})$, that is, $\Prob( H(\scr{S}) \le t) \ge \Prob( H(\scr{S}') \le t)$ for any $t$.

The next lemma is straightforward. Its proof is an adaptation of the proof for the original process~\cite{beneliezer2019semirandom}. 

\begin{lemma}\label{lem:optimalstrategy}
Let $k, \ell \in \Nn$, and consider the property $\scr{P}_\ell$. The strategy $\scr{S}_0$ dominates any other strategy $\scr{S}$ against the $k$-process.
\end{lemma}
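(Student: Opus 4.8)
The plan is to prove the lemma by a coupling argument that mimics the $k=1$ case from~\cite{beneliezer2019semirandom}, showing that $\scr{S}_0$ dominates an arbitrary strategy $\scr{S}$ one round at a time. The natural framework is a step-by-step exchange argument: I would introduce an intermediate strategy that agrees with $\scr{S}$ for the first $t-1$ rounds, plays the greedy move at round $t$, and then plays optimally (or just continues with a coupled copy of $\scr{S}$'s future decisions) from round $t+1$ onward; then one argues this intermediate strategy is at least as good as $\scr{S}$. Iterating (or taking a suitable limit) over $t$ converts $\scr{S}$ into $\scr{S}_0$ without ever decreasing the probability of having reached $\scr{P}_\ell$ by any given time.

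The key technical device will be a \emph{domination between degree sequences}: say a multigraph $G$ is \emph{at least as good as} $G'$ (written $G \succeq G'$) if, after sorting both degree sequences in nondecreasing order, $G$'s sorted sequence dominates $G'$'s coordinatewise. First I would record the elementary monotonicity facts: (a) if $G \succeq G'$ then $G \in \scr{P}_\ell$ whenever $G' \in \scr{P}_\ell$ (indeed $\scr{P}_\ell$ depends only on the smallest degree); and (b) this relation is preserved under adding edges in a compatible way. Then the heart of the matter is a one-round lemma: if $G_t \succeq G_t'$, and a round of the $k$-process presents the same $k$ squares $u_t^1,\dots,u_t^k$ to both, then the greedy move applied to $G_t$ (select the square on a minimum-degree vertex of $G_t$, attach a circle to another minimum-degree vertex of $G_t$) produces a graph that is at least as good as the graph produced by \emph{any} legal move of $\scr{S}$ on $G_t'$. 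This is proved by the standard ``sorted-sequence'' exchange argument: incrementing a coordinate of a dominating sorted sequence at the position of its minimum, versus incrementing an arbitrary coordinate of the dominated one, preserves coordinatewise domination of the re-sorted sequences.

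Concretely, I would carry out the steps in this order: (1) define $\succeq$ and prove facts (a), (b); (2) prove the one-round domination lemma above, which requires a small case analysis on where $\scr{S}$'s chosen square and circle sit in the sorted order of $G_t'$ relative to where greedy acts in $G_t$; (3) set up the coupling: use the same random square-tuples $(u_t^1,\dots,u_t^k)$ for both processes, let $\scr{S}_0$ act greedily and let $\scr{S}$ act according to its (possibly randomized) rule, and show by induction on $t$ that $G_t^{\scr{S}_0} \succeq G_t^{\scr{S}}$ holds pointwise under this coupling (for randomized $\scr{S}$, condition on its internal randomness and average); (4) conclude: since $G_t^{\scr{S}_0} \succeq G_t^{\scr{S}}$ always under the coupling, the event $\{G_t^{\scr{S}} \in \scr{P}_\ell\}$ is contained in $\{G_t^{\scr{S}_0} \in \scr{P}_\ell\}$, hence $\Prob(H(\scr{S}_0) \le t) \ge \Prob(H(\scr{S}) \le t)$ for all $t$, which is exactly the claimed domination.

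The main obstacle is step (2), the one-round lemma, specifically handling the interaction between ``which square to pick'' (a choice among $k$ presented vertices, with their degrees in $G_t$ possibly quite different from their degrees in $G_t'$) and ``which circle to attach.'' One must be careful that greedy's freedom to pick the best of the $k$ squares is genuinely helpful and never hurts relative to $\scr{S}$'s choice, and that attaching the circle to a minimum-degree vertex is optimal regardless of which square was used, even when $\scr{S}$ attaches its circle to the \emph{same} vertex it used as a square (a loop) or to one of the other presented squares. The cleanest way around this is to reduce everything to the abstract statement: ``given two nondecreasing sequences $d \succeq d'$ of equal length, and given that the adversary increments some one coordinate of $d'$ by one, the player can increment the minimum coordinate of $d$ by one so that the re-sorted sequences still satisfy $\succeq$'' — and then observe that greedy's extra power (choice of square) only enlarges the set of coordinates the player is allowed to hit in $d$, so it can always at least match this abstract guarantee; the circle placement is subsumed since it too increments one coordinate. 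I expect this reduction, once stated carefully, to make the proof short, with the remaining work being the routine verification of the abstract sorted-sequence fact.
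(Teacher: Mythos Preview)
Your central invariant --- that under the coupling the sorted degree sequence of $G_t^{\scr{S}_0}$ dominates that of $G_t^{\scr{S}}$ coordinatewise --- is \emph{not} preserved, and the ``abstract sorted-sequence fact'' you reduce to in step~(2) is simply false. Take $n=4$, $k=1$, and $G_t = G'_t$ with degree sequence $(0,0,2,2)$ on vertices $a,b,c,d$. Suppose the single square lands on $c$. Greedy places the circle on a minimum-degree vertex, say $a$, producing sorted sequence $(0,1,2,3)$. The adversary may instead place the circle on $d$, producing sorted sequence $(0,0,3,3)$. Now $(0,1,2,3)\succeq(0,0,3,3)$ fails at the third coordinate ($2<3$). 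The same phenomenon breaks your one-increment abstraction already at $d=d'=(0,2)$: adversary sends $d'$ to $(0,3)$, player sends $d$ to $(1,2)$, and $(1,2)\not\succeq(0,3)$. The point is that greedy balances the sequence, and a more balanced sequence \emph{never} coordinatewise dominates a less balanced one with the same sum; only the bottom coordinates improve, the top ones necessarily drop. So the inductive hypothesis you need for step~(3) cannot be maintained, and no amount of case analysis in step~(2) will rescue it.

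The paper's proof avoids this by \emph{not} attempting to compare $\scr{S}_0$ directly to an arbitrary $\scr{S}$ via a global sequence invariant. Instead it works one deviation at a time: given an $(i,j)$-minimizing strategy $\scr{S}$, it constructs an $(i{+}1,j)$-minimizing $\scr{S}'$ that dominates $\scr{S}$. The coupling keeps $G_t$ and $G'_t$ identical except at the two vertices $u,v$ where the round-$(i{+}1)$ choices differed, so one only has to track $\deg(u)$ and $\deg(v)$; a relabelling trick collapses the two processes back to having identical degree \emph{multisets} once $\deg_{G_t}(u)=\deg_{G_t}(v)-1$. This two-vertex bookkeeping is exactly what sidesteps the failure above: the ``damage'' greedy does to high-degree coordinates is confined to a single vertex and is repaired by the swap, rather than propagated through a global order relation. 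If you want to salvage a direct-comparison approach, the natural candidate is majorization (greedy's sequence is always \emph{majorized by} the adversary's, since greedy is more balanced), but turning that into a pointwise coupling statement strong enough to conclude $\{G_t^{\scr{S}}\in\scr{P}_\ell\}\subseteq\{G_t^{\scr{S}_0}\in\scr{P}_\ell\}$ requires care and is not what you wrote.
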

\begin{proof}
We say that a strategy $\scr{S}$ is \textbf{$(i,j)$-minimizing} if in each of the first $i$ rounds, the player chooses a square of minimum degree and in each of the first $j$ rounds, she puts a circle on a vertex of minimum degree. Strategy $\scr{S}$ is said to be \textbf{minimizing} if it is $(i,j)$-minimizing for every $i$ and $j$. In order to prove the lemma, since domination is a transitive relation and any strategy is $(0,0)$-minimizing, it is enough to show that any $(i,j)$-minimizing strategy is dominated by some $(i+1,j)$-minimizing strategy as well as some $(i,j+1)$-minimizing strategy. Since $\scr{S}_0$ is minimizing and any two minimizing strategies dominate one another, we conclude that $\scr{S}_0$ dominates any other strategy $\scr{S}$.

Consider any $(i,j)$-minimizing strategy $\scr{S}$. We will modify it slightly and create two new strategies, $\scr{S}'$ and $\scr{S}''$, that are $(i+1,j)$-minimizing and, respectively, $(i,j+1)$-minimizing. We can imagine a player using strategy $\scr{S}$ on the graph $G_t$ and another player using strategy $\scr{S}'$ on an auxiliary graph $G'_t$. The two games are coupled such that squares appear in both games at the same locations. 

During the first $i$ rounds, the strategy $\scr{S}'$ is the same as the strategy $\scr{S}$. Suppose that at round $i+1$, with probability $p>0$, $\scr{S}$ chooses a square that lands on a vertex $v$ but the decision is made in a non-greedy fashion (that is, the chosen vertex does not have minimum degree among the $k$ offered vertices). We condition on this event, and slightly modify $\scr{S}$ to get $\scr{S}'$ as follows. At round $i+1$, strategy $\scr{S}'$ chooses a square that lands on a vertex $u$ that has minimum degree; in particular, $\deg_{G_{i}}(u) < \deg_{G_{i}}(v)$. From that point on, the two graphs, $G_t$ and $G'_t$, are going to differ. For the rest of the game, as long as $\deg_{G_{t}}(u) \le \deg_{G_{t}}(v)-2$, $\scr{S}'$ continues ``stealing'' strategy $\scr{S}$, that is, both the choices for squares and for circles are exactly the same in both games. If, at any point of the game, $\deg_{G_{t}}(u) = \deg_{G_{t}}(v)-1$, then $u$ and $v$ are ``relabeled'' in $G'_t$ (that is, $v$ becomes $u$ and $u$ becomes $v$). After that, we continue coupling the two games but now each time a square lands on $u$ from $G_t$, we modify the coupling so that it also lands on $u$ (that used to be initially labelled as $v$) in $G'_t$. The same property holds for $v$. Clearly, after this modification, we preserve the property that vertices $u^1_t, \ldots, u^k_t$ are chosen independently and uniformly at random from $[n]$. The strategy $\scr{S}'$ continues ``stealing'' strategy $\scr{S}$. 

A simple but important property is that from time $i+1$ on, but before possible relabelling, $\deg_{G'_t} (u) = \deg_{G_t}(u)+1$ and $\deg_{G'_t}(v) = \deg_{G_t}(v)-1$. Since $\deg_{G_{t}}(u) \le \deg_{G_{t}}(v)-2$, $\deg_{G'_{t}}(u) \le \deg_{G'_{t}}(v)$. As a consequence, 
$$
\min\{ \deg_{G_t}(u), \deg_{G_t}(v) \} = \deg_{G_t}(u) = \deg_{G'_t}(u)-1 = \min\{ \deg_{G'_t}(u), \deg_{G'_t}(v) \} - 1.
$$
For any other vertex $w \not\in \{u, v\}$, $\deg_{G_t}(w) = \deg_{G'_t}(w)$. Hence, provided that no relabelling took place, $\min\{ \deg_{G_t}(u), \deg_{G_t}(v) \} \ge \ell$ implies that $\min\{ \deg_{G'_t}(u), \deg_{G'_t}(v) \} \ge \ell+1$ and so the desired property $\scr{P}_\ell$ cannot be achieved by the strategy $\scr{S}$ before it is achieved by the strategy $\scr{S}'$. Finally, note that when $\deg_{G_{t}}(u) = \deg_{G_{t}}(v)-1$, we have that $\deg_{G_{t}}(u) = \deg_{G'_{t}}(v)$ and $\deg_{G_{t}}(v) = \deg_{G'_{t}}(u)$. Hence, after relabelling, the degree distribution in $G_t$ is exactly the same as the degree distribution in $G'_t$ (despite the fact that graphs are possibly different). This property will be preserved to the end of the process and so both strategies will achieve the desired property $\scr{P}_\ell$ at the same time. 

The same argument can be repeated to create an $(i,j+1)$-minimizing strategy $\scr{S}''$. This finishes the proof of the lemma.
\end{proof}

\subsection{Analysis of the Optimal Strategy}\label{sec:min_degree_analysis}

In this subsection, we analyze the greedy strategy that was introduced and proved to be optimal in the previous subsection. This establishes $\tau_{\scr{P}_\ell}(k)$ for any value of $\ell$ and $k$. 

\begin{theorem}
Let $k, \ell \in \Nn$. Then, $\tau_{\scr{P}_\ell}(k) = c_{\ell,k}$, where $c_{\ell,k} \ge \ell/2$ is a constant that is derived from a system of differential equations. The numerical values for $1 \le k,\ell \le 5$ are presented in Table~\ref{tab:min_degree}.
\end{theorem}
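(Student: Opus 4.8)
The plan is to analyze the greedy strategy $\scr{S}_0$ from Lemma~\ref{lem:optimalstrategy} using the differential equation method (Theorem~\ref{thm:differential_equation_method}), tracking the number of vertices of each degree $0, 1, \ldots, \ell-1$ as the process evolves. Concretely, for $0 \le j \le \ell-1$ let $Y_j(t)$ denote the number of vertices of degree exactly $j$ in $G_t$, and let $\scr{F}_t$ be the natural filtration. The state of the process relevant to the greedy strategy is captured by the vector $(Y_0(t), \ldots, Y_{\ell-1}(t))$, since vertices of degree $\ge \ell$ are never again chosen as circles and are never chosen as squares as long as lower-degree vertices exist. The stopping condition is the first time all $Y_j(t) = 0$ for $j < \ell$; equivalently, $H(\scr{S}_0)$ is essentially $T_{\scr{D}}$ for the domain $\scr{D}$ in which $\sum_{j<\ell} y_j$ stays positive, and $\tau_{\scr{P}_\ell}(k)$ is the normalized hitting time.

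The key computational step is to derive the one-step expected changes $\E[Y_j(t+1) - Y_j(t) \mid \scr{F}_t]$ as functions $F_j(s, y_0, \ldots, y_{\ell-1})$ of the scaled variables. Writing $s = t/n$ and $y_j = Y_j(t)/n$, let $m = m(s) := \min\{j : y_j > 0\}$ be the current minimum degree. With probability tending to $1$ conditioned on the state, at least one of the $k$ i.i.d.\ uniform squares lands on a vertex of degree $m$ (this fails with probability $(1 - (y_m + \sum_{j<m}0)\cdot(\text{stuff}))^k$, but more carefully the probability that \emph{no} square hits a degree-$m$ vertex is $(1 - y_m - \Prob(\text{lower}))^k$; since degrees below $m$ are empty this is $(1-y_m)^k$ up to the rounding conventions), so the chosen square $u_t$ has degree $m$; however one must handle the lower-order event that all $k$ squares miss degree-$m$ vertices, in which case the minimum degree among the offered squares is $m' > m$ — this contributes only to the error term $\delta$ after checking it is $o(1)$, or one incorporates it exactly into $F_j$. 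The circle $v_t$ is placed on a vertex of degree $m$ as well (assuming $y_m > 1/n$, i.e.\ at least two such vertices; the boundary case where only one vertex of minimum degree remains is again absorbed into $\delta$). Thus in a typical step, two vertices of degree $m$ become degree $m+1$: $Y_m$ decreases by $2$, $Y_{m+1}$ increases by $2$ (or by less if $m+1 = \ell$, in which case only the net effect on lower classes matters), giving $F_m = -2$, $F_{m+1} = +2$ on the region $\{y_m > 0, y_{m-1} = \cdots = y_0 = 0\}$, and $F_j = 0$ otherwise. One solves this piecewise-linear system phase by phase: starting from $y_0(0) = 1$, all others $0$, the first phase empties $Y_0$ at rate $2$ while filling $Y_1$; when $Y_0$ hits $0$ the dynamics switch to emptying $Y_1$, etc. Each phase is linear and the phase lengths sum to $c_{\ell,k}$.

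To invoke Theorem~\ref{thm:differential_equation_method} rigorously I would: (i) verify the Initial Condition with $\hat y_0 = 1$, other $\hat y_j = 0$, and $\lambda = O(1/n)$ (or a small polynomial), so $Y_0(0) = n$ exactly; (ii) verify the Trend Hypothesis with some $\delta = O(1/n)$ or $\delta = O(\log n / n)$, which requires bounding the probability of the atypical events described above (no square of minimum degree; a unique minimum-degree vertex so the circle must go to degree $m+1$; a square landing on the same vertex as the circle) — these occur with probability $O(1/n)$ or are negligible when the relevant $Y_j$ is large, which is exactly the regime $t \le T_{\scr{D}} - 1$; (iii) verify the Boundedness Hypothesis trivially with $\beta = 2$ and $\gamma = 0$, since each step changes each $Y_j$ by at most $2$; (iv) choose the Lipschitz constant $L$, domain $\scr{D}$, and bounds $R, S$ — here $\scr{D}$ should be chosen phase-by-phase or as one domain with the understanding that the $F_j$ are piecewise linear hence Lipschitz on each phase. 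The main obstacle is the piecewise nature of the dynamics: the functions $F_j$ are only Lipschitz within each phase, so I would either apply the theorem separately on each phase (re-initializing at the end of the previous phase, with the error $3\lambda\exp(LS)n$ from one phase feeding into the Initial Condition of the next, which only multiplies constants) or smooth the boundaries; and I must argue that the total number of phases is $\ell$, a constant, so the accumulated multiplicative error stays $O(1)$ times a small $\lambda$, giving concentration of $H(\scr{S}_0)/n$ around $c_{\ell,k}$ and hence $\tau_{\scr{P}_\ell}(k) = c_{\ell,k}$. The lower bound $c_{\ell,k} \ge \ell/2$ is immediate since achieving minimum degree $\ell$ requires at least $\ell n / 2$ edges, i.e.\ $\ell n /2$ rounds.
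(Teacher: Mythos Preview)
Your overall plan (track $Y_0,\ldots,Y_{\ell-1}$, run the DE method phase by phase with $\beta=2$, feed endpoint errors forward) matches the paper. The fatal mistake is in the one-step drift. You write that the event ``all $k$ squares miss degree-$m$ vertices'' is a lower-order event that ``contributes only to the error term $\delta$ after checking it is $o(1)$''. But you yourself compute that probability as $(1-y_m)^k$, which is a fixed positive constant whenever $y_m$ is bounded away from $1$, i.e.\ throughout most of each phase. It is \emph{not} $o(1)$ for fixed $k$, so it cannot be absorbed into $\delta$; it must go into the drift functions $F_j$.

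Concretely, during phase $q$ the circle always hits a degree-$q$ vertex, but the selected square has degree $j\ge q$ with probability
\[
\Prob(\mathcal A_j)=\Bigl(1-\sum_{a=q}^{j-1}y_a\Bigr)^{k}-\Bigl(1-\sum_{a=q}^{j}y_a\Bigr)^{k},
\]
giving
\[
y_i'=-\delta_{i=q}+\delta_{i=q+1}-\Prob(\mathcal A_i)+\delta_{i\ge q+1}\Prob(\mathcal A_{i-1}),
\]
which is a genuine nonlinear system (polynomial of degree $k$ in the $y_a$'s), not the piecewise-linear $F_m=-2$, $F_{m+1}=+2$ you state. Your linear system integrates to phase lengths $1/2$ each and hence $c_{\ell,k}=\ell/2$ for every $k$, contradicting e.g.\ $\tau_{\scr P_1}(1)=\ln 2$ from the table; the value $\ell/2$ is only the $k\to\infty$ limit. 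Once you replace the drift by the correct expressions above, the rest of your argument (Lipschitz on each phase region, $\beta=2$, $\gamma=0$, error propagation across $\ell$ phases) goes through exactly as in the paper.
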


\begin{proof}
In the greedy strategy $\scr{S}_0$, we distinguish phases by labelling them with integers $q \in \{0, 1, \ldots, \ell-1\}$. During the $q$th phase, the minimum degree in $G_t$ is equal to $q$. In order to analyze the evolution of the $k$-process, we will track the following sequence of $\ell$ variables: for $0 \le i \le \ell-1$, let $Y_i = Y_i(t)$ denote the number of vertices in $G_t$ of degree $i$. 

Phase~$0$ starts at the beginning of the $k$-process. Since $G_0$ is empty, $Y_0(0) = n$ and $Y_i(0) = 0$ for $1 \le i \le \ell-1$. There are initially many isolated vertices but they quickly disappear. Phase~$0$ ends at time $t$ which is the smallest value of $t$ for which $Y_0(t) = 0$. The DEs method will be used to show that a.a.s.\ Phase~$0$ ends at time $t_0 \sim x_0 n$, where $x_0$ is an explicit constant which will be obtained by investigating the associated system of DEs. Moreover, the number of vertices of degree $i$ ($1 \le i \le \ell-1$) at the end of this phase is well concentrated around some values that are also determined based on the solution to the same system of DEs: a.a.s.\ $Y_i(t_0) \sim y_i(x_0) n$. With that knowledge, we move on to Phase~$1$ in which we prioritize vertices of degree 1. 

Consider any Phase~$q$, where $q \in \{0, 1, \ldots, \ell-1\}$. This phase starts at time $t_{q-1}$, exactly when the previous phase ends (or at time $t_{-1} := 0$ if $q=0$). At that point, the minimum degree of $G_{t_{q-1}}$ is $q$, so $Y_i(t) = 0$ for any $t \ge t_{q-1}$ and $i < q$. Hence, we only need to track the behaviour of the remaining $\ell-q$ variables. Let $\mathcal{A}_j(t)$ be the event that at time $t$ the player selects a square that lands on a vertex with degree $j$, that is, $\mathcal{A}_j(t) = \{ \deg_{G_t}(u_t^{i_t}) = j \}$. The probability that $\mathcal{A}_j(t)$ holds can be computed based on the sequence of $Y_i(t)$'s. To that end, it is convenient to introduce the following auxiliary event. Let $\mathcal{B}_j(t)$ be the event that at time $t$ all squares land on vertices of degree at least $j$, that is, $\mathcal{B}_j(t) = \{ \deg_{G_t}(u_t^{i}) \ge j, \text{ for all } i \in [k] \}$. Clearly, $\mathcal{B}_{j+1}(t) \subseteq \mathcal{B}_j(t)$ and $\mathcal{A}_j(t) = \mathcal{B}_j(t) \setminus \mathcal{B}_{j+1}(t)$. As a result,
\begin{align*}
\Prob ( \mathcal{A}_j(t) ) &= \Prob( \mathcal{B}_j(t) ) - \Prob ( \mathcal{B}_{j+1}(t) ) \\
&= \left( 1 - \sum_{a = q}^{j-1} \frac {Y_a(t)}{n} \right)^k - \left( 1 - \sum_{a = q}^{j} \frac {Y_a(t)}{n} \right)^k.
\end{align*}
Let us denote $H(t) = (Y_q(t), Y_{q+1}(t), \ldots, Y_{\ell-1}(t))$. Let $\delta_A$ be the Kronecker delta for the event $A$, that is, $\delta_A = 1$ if $A$ holds and $\delta_A=0$ otherwise. Then, for any $i$ such that $q \le i \le \ell-1$, 
\begin{align}\label{eq:min_degree_trend} 
\E ( Y_i(t+1) - Y_i(t) ~|~ H(t) ) &= -\delta_{i=q} + \delta_{i=q+1} - \Prob( \mathcal{A}_i(t) ) + \delta_{i \ge q+1} \Prob( \mathcal{A}_{i-1}(t) ) + \bigo(1/n).
\end{align}
Indeed, since the circle is put on a vertex of degree $q$, we always lose one vertex of degree $q$ (term $-\delta_{i=q}$) that becomes of degree $q+1$ (term $\delta_{i=q+1}$). The term $\bigo(1/n)$ is added to cover the (rare) case when one of the squares lands on the very last vertex of degree $q$. Alternatively, we could have stopped analyzing the duration of Phase $q$ prematurely when the number of vertices of degree $q$ is at most one; this phase would finish it in at most one extra round anyway. We might lose a vertex of degree $i$ when the selected square lands on a vertex of degree $i$ (term $\Prob( \mathcal{A}_i(t) )$. We might also gain one of them when the selected square lands on a vertex of degree $i-1$ (term, $\Prob( \mathcal{A}_{i-1}(t)$); note that this is impossible if $i = q$ (term $\delta_{i \ge q+1}$). This suggests the following system of DEs: for any $i$ such that $q \le i \le \ell-1$, 
\begin{align}
y'_i(x) &= -\delta_{i=q} + \delta_{i=q+1} \nonumber \\
&- \left( \left( 1 - \sum_{a = q}^{i-1} y_a(x) \right)^k - \left( 1 - \sum_{a = q}^{i} y_a(x) \right)^k \right)  \nonumber \\
&+ \delta_{i \ge q+1} \left( \left( 1 - \sum_{a = q}^{i-2} y_a(x) \right)^k - \left( 1 - \sum_{a = q}^{i-1} y_a(x) \right)^k \right). \label{eq:DEs_min_degree}
\end{align}

Let us now check that the assumptions of the DEs method are satisfied and then discuss the conclusions. Let $\eps > 0$ be an arbitrarily small constant and $\omega = \omega(n)$ be any function that tends to infinity as $n \to \infty$. We will ensure that for some universal constant $C > 0$, at the beginning of Phase~$q$, the initial condition is satisfied with $\lambda = C^{q} \omega / \sqrt{n} = o(1)$. (At the beginning of Phase~0, there is no error in the initial condition so this property is trivially satisfied.) In particular, we assume that the phase starts at time $t_{q-1} \sim x_{q-1} n$ for some constant $x_{q-1} \in [0,\infty)$, and for any $q \le i \le \ell-1$, $Y_i(t_{q-1}) \sim y_i(x_{q-1}) n$ for some constants $y_i(x_{q-1}) \in (0,1]$. The right hand side of~(\ref{eq:DEs_min_degree}) is continuous, bounded, and Lipschitz in the connected open set 
$$
\scr{D} = \{ (x, y_q, \ldots, y_{\ell-1}) : -\eps < x < \ell + \eps, -\eps < y_i < 1+\eps \},
$$
which contains the point $(x_{q-1}, y_q(x_{q-1}), \ldots, y_{\ell-1}(x_{q-1}) )$. Note that there is a small error in the `Trend Hypothesis', that is, $\delta = \bigo(1/n)$ (see~(\ref{eq:min_degree_trend})). Finally, note that the `Boundedness Hypothesis' holds deterministically ($\gamma = 0$) with $\beta=2$. 

We conclude, based on Theorem~\ref{thm:differential_equation_method}, that a.a.s.\ during the entire Phase~$q$,
$$
\max_{q \le i \le \ell-1} |Y_{i}(t) -y_{i}(t/n) n| < \lambda C n = o(n),
$$
provided that $C$ is a large enough constant. In particular, Phase~$q$ ends at time $t_q \sim x_q n$, where $x_q > x_{q-1}$ is the solution of the equation $y_q(x)=0$. Using the final values $y_i(x_q)$ in Phase~$q$ as initial values for Phase~$q+1$ we can repeat the argument inductively moving from phase to phase. The desired property is achieved at the end of Phase $\ell-1$ when a graph of minimum degree equal to $\ell$ is reached. 
\end{proof}

\subsection{Large value of $k$ or $\ell$}\label{sec:min_degree_large}

A natural question that arises is about the asymptotic behaviour of $\tau_{\scr{P}_\ell}(k)$ as either $\ell$ grows large or $k$ grows large.

\medskip

First, let us show that $\tau_{\scr{P}_\ell}(k) \to \ell/2$ as $\ell \to \infty$. 

\begin{theorem}
Fix $k \in \Nn$. Then, 
$$
\frac {\ell}{2} \le \tau_{\scr{P}_\ell}(k) \le \frac {\ell}{2} \left( 1 + \bigo(\sqrt{\log \ell / \ell}) \right).
$$
\end{theorem}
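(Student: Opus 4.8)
The plan is to prove the two bounds separately. The lower bound $\tau_{\scr{P}_\ell}(k) \ge \ell/2$ is immediate: achieving minimum degree $\ell$ requires at least $\ell n/2$ edges (the sum of degrees is at least $\ell n$ and each round adds exactly one edge, hence one to the total degree count contributes $2$ to the degree sum only when... wait---each edge contributes $2$ to the degree sum, so we need at least $\ell n / 2$ rounds regardless of strategy and regardless of $k$). This holds for every $n$, so it passes to the limit.

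For the upper bound I would exhibit a concrete (suboptimal but analyzable) strategy and bound its hitting time. The natural choice is the following two-stage approach. \textbf{Stage 1:} run for $t_1 = (1+\eps)\tfrac{\ell}{2} n$ rounds, ignoring the $k$ choices entirely (equivalently, always take $u_t = u_t^1$) and placing each circle $v_t$ uniformly at random (or greedily on a minimum-degree vertex---uniform is easier to analyze). Then each vertex receives squares according to a Poisson-like count: the number of squares landing on a fixed vertex is $\Bin(t_1, 1/n)$, with mean $\mu := (1+\eps)\ell/2$. By the Chernoff bound \eqref{chern}, the probability that a fixed vertex receives fewer than $\ell$ squares is at most $\exp(-c\eps^2\ell)$ for an appropriate constant... but this is not $o(1/n)$ unless $\eps \gg \sqrt{\log n / \ell}$, which is the wrong dependence. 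So instead I would choose $\mu = \ell/2 \cdot (1 + c\sqrt{\log\ell/\ell})$ with $c$ large; this still gives a per-vertex failure probability of only $\exp(-\Theta(\log \ell)) = \ell^{-\Theta(1)}$, so in expectation $n\ell^{-\Theta(1)}$ vertices remain deficient, which is $o(n)$ but not $o(1)$. \textbf{Stage 2:} clean up the remaining low-degree vertices. After Stage 1, a.a.s. the number of vertices with degree $< \ell$ is $O(n \ell^{-c'})$ for some $c' > 0$; in Stage 2 we use the greedy strategy (or even just: whenever the current graph has a vertex of degree $< \ell$, place the next circle on it), which raises the minimum degree to $\ell$ in at most $\ell \cdot (\text{number of deficient vertices}) = O(n \ell^{1-c'})$ additional rounds, which is $o(n)$ and in fact $o(\sqrt{\log\ell/\ell}\cdot \ell n)$ if we take $c'$ large enough by tuning $c$. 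Adding the two stages gives a hitting time of $\tfrac{\ell}{2}(1 + O(\sqrt{\log\ell/\ell}))\, n$ a.a.s., hence the claimed bound on $\tau_{\scr{P}_\ell}(k)$; note $k$ plays no role (the bound is uniform in $k$, consistent with monotonicity in $k$).

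The details I would need to fill in: (1) a union bound over the $n$ vertices for the Stage-1 count requires the per-vertex deficiency probability times $n$ to control the \emph{expected} number of bad vertices, then a concentration statement (Chernoff again, since the indicator events for distinct vertices are negatively correlated---using the remark after \eqref{chern} on negatively correlated Bernoullis) to say the actual number of bad vertices is a.a.s. at most twice its expectation; (2) a clean estimate for $\Prob(\Bin(t_1,1/n) < \ell)$ with $t_1/n = \ell/2 + \Theta(\sqrt{\ell\log\ell})$---here the relevant deviation is $t/\E X \approx$ a constant fraction when... actually the gap between $\ell$ and the mean $t_1/n$ is $\Theta(\sqrt{\ell \log \ell})$, and $\E X = \Theta(\ell)$, so \eqref{chern} gives $\exp(-\Theta(\ell\log\ell/\ell)) = \exp(-\Theta(\log\ell)) = \ell^{-\Theta(1)}$, as desired; (3) verifying Stage 2 genuinely terminates in the claimed number of rounds---trivial, since each round with a misplaced-free greedy circle strictly decreases $\sum_v \max(0,\ell - \deg(v))$ by exactly $1$ as long as that sum is positive.

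The main obstacle is calibrating the overshoot in Stage 1: we want it small (it is the source of the $\sqrt{\log\ell/\ell}$ error term) yet large enough that the Stage-2 cleanup is cheaper than the Stage-1 overshoot, so that the two error contributions are of the same order $\Theta(\ell n \sqrt{\log\ell/\ell}) = \Theta(n\sqrt{\ell\log\ell})$. Concretely, with overshoot parameter giving per-vertex failure $\ell^{-c}$, Stage 2 costs $O(\ell^{1-c} n)$ rounds, and we need $\ell^{1-c} = O(\sqrt{\ell\log\ell}/\ell) \cdot \ell = O(\sqrt{\ell \log \ell})$... i.e. $c \ge 1/2$ suffices up to the log factor, and any $c$ slightly above $1/2$ (achieved by taking the constant in front of $\sqrt{\log\ell/\ell}$ large enough) makes Stage 2 negligible compared to Stage 1's overshoot. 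Everything else is a routine application of the Chernoff bounds from Section~\ref{sec:concentration} and does not require the differential equation method at all.
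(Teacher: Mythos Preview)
Your overall two-phase plan is the right idea and is close to the paper's approach, but there is a genuine error in the Stage~1 analysis: you repeatedly refer to the event that a vertex receives ``fewer than $\ell$ \emph{squares}'' and estimate $\Prob(\Bin(t_1,1/n)<\ell)$ with $t_1/n \approx \ell/2$. That probability is close to~$1$, not small---the mean square count is only about $\ell/2$, so asking for $\ell$ squares is asking for a value roughly twice the mean, and the \emph{lower}-tail Chernoff bound~\eqref{chern} says nothing here. The claim that ``the gap between $\ell$ and the mean $t_1/n$ is $\Theta(\sqrt{\ell\log\ell})$'' is off by an additive $\ell/2$. What actually gives degree~$\ell$ is squares \emph{plus} circles; with your uniform circle placement the degree of a fixed vertex is $\Bin(2t_1,1/n)$ with mean $\approx \ell(1+\eps)$, and \emph{that} is the random variable you should be tail-bounding. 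Once you make this fix, your Stage~1/Stage~2 calibration goes through essentially as you describe.

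For comparison, the paper avoids this slip by placing circles \emph{deterministically} in round-robin fashion during Phase~1, so that every vertex receives exactly $\ell/2$ circles; then only the square count $\Bin(\ell n/2,1/n)$ (mean $\ell/2$) fluctuates, and the relevant threshold is $\ell/2 - 2\sqrt{\ell\log\ell}$, not~$\ell$. The paper also does \emph{not} overshoot in Phase~1---it runs exactly $\ell n/2$ rounds---and lets Phase~2 absorb the entire $\bigo(n\sqrt{\ell\log\ell})$ deficit by placing circles on deficient vertices. Your variant (overshoot in Stage~1, random circles, cheap Stage~2) is a legitimate alternative once the degree/square confusion is corrected, but the deterministic-circle trick halves the variance and makes the bookkeeping cleaner.
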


\begin{proof}
Noting that $\tau_{\scr{P}_\ell}(k)$ is a non-increasing function in $k$ and, trivially, for any value of $k$ we have $\tau_{\scr{P}_\ell}(k) \ge \ell/2$, we only investigate the case $k=1$.  One can try to analyze an optimal, greedy strategy but we aim for an easy argument without trying to optimize the error term, as long as it goes to zero as $\ell \to \infty$.  

Our algorithm consists of two phases. In Phase $1$, which lasts $\ell n / 2$ rounds, we place circles sequentially, that is, in round $i$, a circle is placed on vertex $i-1 \pmod{n} + 1$. As a result, at the end of Phase $1$, each vertex has exactly $\ell/2$ circles. Let $X_v$ denote the number of squares on vertex $v$ at the end of Phase~1. Then $X_v \in \Bin(\ell n/2, 1/n)$, with $\E(X_v) = \ell/2$. Let $t := 2\sqrt{\ell \log \ell}$. Then, by the Chernoff bound~(\ref{chern}) 
$$
  \Prob(X_v\leq \ell/2 - t ) \leq \exp \left( -\frac{t^2}{\ell} \right)  = \exp(-4 \log \ell) = 1 / \ell^4.
$$
Hence, we expect at most $n / \ell^4$ vertices with at most $\ell/2 - t$ squares. More importantly, the events ``$X_v\leq \ell/2 - t$'' associated with various vertices are negatively correlated. It follows immediately from the Chernoff bound~(\ref{chern1}) (see also the comment right after it) that a.a.s.\ there are at most $2n / \ell^4$ vertices with at most $\ell/2 - t$ squares. (Alternatively, one could estimate the variance and use Chebyshev's inequality.) 

Let a vertex $v$ be considered deficient if $\deg(v) < \ell$. Furthermore, define the deficit of a deficient vertex $v$ to be equal to $\ell - \deg(v)$. Then, at the end of Phase~$1$, a.a.s.\ at most $2n /\ell^4$ vertices have a deficit of at most $\ell/2$ (a trivial, deterministic upper bound), with the remaining vertices having deficit at most $2\sqrt{\ell \log \ell}$. In Phase $2$, we place circles on the deficient vertices to bring the deficit down to $0$. This takes at most $n/\ell^3 + 2n\sqrt{\ell \log \ell}$ rounds. Thus, the total number of rounds is at most 
$$
n \ell/2 + n/\ell^3 + 2n\sqrt{\ell \log \ell} = \frac {n\ell}{2} \left( 1 + \bigo(\sqrt{\log \ell / \ell}) \right).
$$ 
It follows that $\tau_{\scr{P}_\ell}(k) = \ell/2 +o_\ell(1) $ as $\ell \to \infty$, as required.
\end{proof}

\medskip

Next, let us show that $\tau_{\scr{P}_\ell}(k) \to \ell/2$ as $k \to \infty$. 

\begin{theorem}
Fix $\ell \in \Nn$. Then, 
$$
\frac {\ell}{2} \le \tau_{\scr{P}_\ell}(k) \le \frac {\ell}{2} \Big( 1 + \bigo( \log k / k ) \Big).
$$
\end{theorem}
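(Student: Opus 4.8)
The plan is to prove the upper bound by exhibiting a simple two‑phase strategy, exploiting that with $k$ choices per round the player can almost always place a square on a minimum‑degree vertex, so the degree sequence stays tightly balanced until nearly all of the trivial budget $\ell n/2$ is spent. Since $\tau_{\scr{P}_\ell}(k)$ is non‑increasing in $k$ and the lower bound $\ell/2$ is immediate (each of the $\ell n/2$ edges contributes $2$ to the degree sum), only the upper bound needs work, and it suffices to analyze one explicit strategy.

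First I would run a Phase $1$ of length $(1-\eta)\ell n/2$ rounds for a small parameter $\eta = \eta(k)$ to be optimized, using the greedy strategy $\scr{S}_0$ (or an even simpler variant): always select a square landing on a current minimum‑degree vertex and place the circle on a minimum‑degree vertex. The key quantitative input is that when a $1-\alpha$ fraction of vertices have degree exactly $d$ (the current minimum) and the rest have degree $\ge d+1$, the probability that \emph{all} $k$ squares miss the degree‑$d$ set is $(1-(1-\alpha))^k = \alpha^k$; so a round "wastes" a circle on a non‑minimum vertex only with probability at most $\alpha^k$, which is tiny unless $\alpha$ is close to $1$, i.e. unless the degree‑$d$ class is nearly exhausted. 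Tracking $Y_i(t) = \#\{v : \deg(v)=i\}$ via the differential equation method (Theorem~\ref{thm:differential_equation_method}, exactly as in Subsection~\ref{sec:min_degree_analysis}), one finds that with $k$ choices the process sweeps each degree class almost "for free": the width of the transition window between minimum degree $d$ and minimum degree $d+1$ is $O((\log k)/k)\cdot n$ rounds, because once a $1/k^{\Theta(1)}$ fraction of vertices remain in the class the probability of hitting it drops below a constant. Summing over the $\ell$ classes, Phase $1$ of length $(1-\eta)\ell n/2$ with $\eta = \Theta((\log k)/k)$ leaves a.a.s. only $o(n)$—in fact at most $O(n/k)$ or so—vertices of degree below $\ell$, each with bounded deficit.

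Then Phase $2$ simply places circles one at a time on the remaining deficient vertices until all have degree $\ell$; this costs at most a constant times the total residual deficit, which is $O(\eta \ell n) = O(n \log k)$ rounds (being generous, even $O(\eta \ell n) = \bigo((\log k / k)\cdot \ell n)$). Adding the two phases gives a total of $(1-\eta)\ell n/2 + \bigo((\log k/k)\ell n) = \tfrac{\ell n}{2}\big(1 + \bigo(\log k/k)\big)$ rounds a.a.s., which yields $\tau_{\scr{P}_\ell}(k) \le \tfrac{\ell}{2}(1 + \bigo(\log k/k))$ after dividing by $n$ and taking limits. Throughout, concentration of the $Y_i$ comes from the DE method, and the residual‑vertex count from a Chernoff bound on a sum of negatively correlated indicators as in the previous theorem.

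The main obstacle is getting the right error \emph{exponent} in $k$: one must verify that the transition window for clearing a degree class really is $O((\log k)/k)\cdot n$ and not, say, $O(1/\sqrt{k})\cdot n$. This requires a careful look at the relevant differential equation near the end of a phase—where the minimum‑degree class has density $\alpha = 1 - \beta$ with $\beta$ small—and checking that $dy/dx \approx -\beta^k$ (from $\Prob(\mathcal{A}_{\min}) = 1 - \beta^k$, balanced against the circle being placed in the class), so that $\beta$ decays double‑exponentially and the class empties within $O((\log k)/k)$ normalized time. The bookkeeping across $\ell$ phases, and ensuring the accumulated DE‑method error $\lambda = C^q\omega/\sqrt n$ stays $o(n)$ exactly as in Subsection~\ref{sec:min_degree_analysis}, is then routine. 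A secondary (minor) point is handling the fact that vertices cleared in earlier phases may still be the global minimum momentarily; but since the strategy always targets the true minimum, earlier classes are never revisited, so the phases are genuinely sequential.
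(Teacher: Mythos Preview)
Your high-level intuition is right, but the proposal has a genuine gap in the key quantitative step, and the paper's approach is much more direct.

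The confusion is in your ``main obstacle'' paragraph. Near the \emph{end} of a phase the minimum-degree class has density close to $0$, not close to $1$; your parametrization $\alpha=1-\beta$ with $\beta$ small is backwards. Writing $y$ for that density, the actual DE is $y' = -\bigl(2-(1-y)^k\bigr)$ (decrease by $2$ unless all $k$ squares miss), so for small $y$ one has $y'\approx -(1+ky)$, not $y'\approx -\beta^k$, and nothing decays double-exponentially. What one actually needs is to bound the total ``slowdown'' $\int_0^T (1-y)^k\,dx$: split at the threshold $y=\log k/k$, use $(1-y)^k\le 1/k$ above the threshold, and note that $y$ spends at most $\log k/k$ normalized time below it since $y'\le -1$. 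That gives the $O(\log k/k)$ excess per phase --- but you never carry this out, and the DE heuristic you do sketch points in the wrong direction.

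The paper bypasses the DE method entirely. It runs the greedy strategy through $\ell$ phases (phase $i$ ends when the minimum degree reaches $i$) and bounds each phase directly: while at least $n\log k/k$ vertices remain of minimum degree, the probability no square hits one is at most $(1-\log k/k)^k\le 1/k$, so the count drops by $2$ except on a $\mathrm{Bin}(t,1/k)$ set of ``slow'' rounds; a single Chernoff bound shows this sub-phase finishes within $n/2$ rounds a.a.s. Once fewer than $n\log k/k$ minimum-degree vertices remain, each round removes at least one (via the circle), so at most $n\log k/k$ further rounds suffice deterministically. Summing over $\ell$ phases gives $\ell n/2\cdot(1+2\log k/k)$. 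This avoids your Phase~1/Phase~2 split, the need to control the full residual deficit after $(1-\eta)\ell n/2$ rounds, and any asymptotic-in-$k$ analysis of the DE system.
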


\begin{proof}
We will investigate a greedy algorithm by considering $\ell$ phases. As before, we do not try to optimize the error term and aim for an easy argument. During Phase~$i$, the minimum degree is equal to $i-1$. The algorithm stops at the end of Phase~$\ell$. We will show that a.a.s.\ each phase takes at most $n/2 + n \log k / k$ rounds, so the total number of rounds is at most $(n\ell/2) (1 + 2 \log k / k)$. Since, trivially, $\tau_{\scr{P}_\ell}(k) \ge \ell/2$, we will get that $\tau_{\scr{P}_\ell}(k) = (\ell/2) ( 1 + \bigo( \log k / k) ) = \ell/2 +o_k(1) $ as $k \to \infty$.

Suppose that Phase~$i$ starts at time $t_i$. Let $X_t$ be the number of vertices of degree $i-1$ at the beginning of round $t$. Clearly, $X_{t_i} \le n$. It is convenient to consider two sub-phases. The first sub-phase continues as long as $X_t \ge n \log k / k$. Note that at any step $t$ of this sub-phase, the probability that no square lands on a vertex of degree $i-1$ is equal to
$$
(1-X_t/n)^k \le \exp( -kX_t/n) \le \exp( - \log k) = 1/k.
$$
It means that $X_t$ goes down by 1 with probability at most $1/k$ and goes down by 2, otherwise. In other words, at time $t \ge t_i$ during this sub-phase, the number of vertices of degree $i-1$ can be stochastically upper bounded as follows:
$$
X_{t} \le X_{t_0} - 2(t-t_i) + {\rm Bin}(t, 1/k) \le n - 2(t-t_i) + {\rm Bin}(t, 1/k).
$$
(The term ${\rm Bin}(t, 1/k)$ corresponds to the number of rounds during which the algorithm ``slows down'' because no square lands on a vertex of degree $i-1$.) Hence, the probability that the first sub-phase does not finish in less than $n/2$ rounds is at most
\begin{eqnarray*}
\Prob ( {\rm Bin}(n/2, 1/k) \ge n \log k / k ) &\le& \Prob ( {\rm Bin}(n/2, 1/k) \ge 2 \E ( {\rm Bin}(n/2, 1/k) ) ) \\
&\le& \exp( - \Theta( n ) ) = o(1),
\end{eqnarray*}
assuming that $k \ge 3 > e$ (which we may, since we aim for a result that holds for $k$ large enough). The second sub-phase takes at most $n \log k / k$ steps (deterministically) so, a.a.s.\ the entire phase ends in at most $n/2 + n \log k / k$ steps, and the desired property holds.
\end{proof}

\section{Perfect Matchings}\label{sec:matching}

In this section, we investigate another classical monotone property that was already studied in the context of semi-random processes, namely, the property of having a perfect matching, which we denote by {\tt PM}. In the very first paper~\cite{beneliezer2019semirandom}, it was shown that the semi-random process is general enough to approximate (using suitable strategies) several well-studied random graph models, including an extensively studied $\ell$-out process (see, for example, Chapter~18 in~\cite{Karonski_Frieze}). In the $\ell$-out process, each vertex independently connects to $\ell$ randomly selected vertices which results in a random graph on $n$ vertices and $\ell n$ edges. 

Since the $2$-out process has a perfect matching a.a.s.~\cite{frieze1986maximum}, we immediately get that $\tau_{\texttt{PM}}(k) \le \tau_{\texttt{PM}}(1) \le 2$. By coupling the semi-random process with another random graph that is known to have a perfect matching a.a.s.~\cite{pittel}, the bound can be improved to $1+2/e < 1.73576$. This bound was recently improved by investigating a fully adaptive algorithm~\cite{gao2022perfect}. The currently best upper bound is $\tau_{\texttt{PM}}(1) < 1.20524$ but there is an easy algorithm that yields the following bound: $\tau_{\texttt{PM}}(1) < 1.27695$. In this paper, we adjust the easy algorithm to deal with the $k$-process and present the corresponding upper bounds (see Subsection~\ref{sec:pm_upperbound}). One could adjust the more sophisticated algorithm as well. We do not do it as the improvement is less significant for larger values of $k$ but the argument is substantially more involved. 

Let us now move to the lower bounds. In the initial paper introducing the semi-random process~\cite{beneliezer2019semirandom}, it was already observed that $\tau_{\texttt{PM}}(1) \ge \tau_{\scr{P}_1}(1) = \ln(2) > 0.69314$. This lower bound was improved as well, and now we know that $\tau_{\texttt{PM}}(1) > 0.93261$~\cite{gao2022perfect}. Since adapting the argument from~\cite{gao2022perfect} to $k \ge 2$ would be much more involved and the improvement would be less significant, we only use the trivial bound and the results from the previous section: $\tau_{\texttt{PM}}(k) \ge \tau_{\scr{P}_1}(k)$. Indeed, the gap between the upper and lower bounds gets small as $k$ is large. In fact, $\tau_{\texttt{PM}}(k) \to 1/2$ as $k \to \infty$ (see Subsection~\ref{sec:pm_large}). 

\begin{table}[htp]
\caption{Perfect Matchings---numerical upper and lower bounds of $\tau_{\texttt{PM}}(k)$ for $1 \le k \le 10$. Stronger bounds for $k=1$ follow from~\cite{gao2022perfect}.}
\begin{center}
\begin{tabular}{|c|c|c|c|c|c|c|}
\hline
& lower bound & upper bound & & lower bound & upper bound \\
\hline
$k=1$ & 0.69315 (0.93261) & 1.27696 (1.20524) & $k=6$ & 0.55075 & 0.66425 \\
$k=2$ & 0.62323 & 0.92990 & $k=7$ & 0.54426 & 0.64243 \\
$k=3$ & 0.59072 & 0.80505 & $k=8$ & 0.53924 & 0.62573 \\
$k=4$ & 0.57183 & 0.73708 & $k=9$ & 0.53525 & 0.61255 \\
$k=5$ & 0.55947 & 0.69402 & $k=10$ & 0.53199 & 0.60187 \\
\hline
\end{tabular}
\end{center}
\label{tab:perfect_matching}
\end{table}

\subsection{Upper bound for $\tau_{\texttt{PM}}(k)$}\label{sec:pm_upperbound}

In this subsection, we analyze the following simple but fully adaptive strategy. In each step $t$ of the algorithm, we will track a partial matching $M_t$ that is already built and the set $U_t$ of unsaturated vertices. Initially, $M_0 = \emptyset$ and $U_0 = [n]$. We will use $V[M_t]$ to denote the set of vertices associated with the edges in $M_t$. Some vertices in $V[M_t]$ will be coloured red or green, and some edges (outside of $M_t$) will be coloured green. The colours will be used to extend a partial matching. Suppose that an edge $bc \in M_t$ and an edge $ab$ is green. (This will make vertex $b$ to be green and vertex $c$ to be red.) If, at some point of the process, the square lands on the red vertex $c$, then the player can create an augmenting path $abcd$ by adding an edge $cd$ to some vertex $d$ outside of $M_t$.

Suppose that at time $t$, $k$ squares land on vertices $u^1_t, \ldots, u^k_t$. We consider a few cases.
\begin{itemize}
\item [Case~(a):] At least one square lands on a vertex from $U_{t-1}$, that is, $\{ u^1_t, \ldots, u^k_t \} \cap U_{t-1} \neq \emptyset$. We arbitrarily select $u_t \in \{ u^1_t, \ldots, u^k_t \} \cap U_{t-1}$, and let $v_t$ be a uniformly random vertex in $U_{t-1}$. If $v_t = u_t$, then the matching and the set of unsaturated vertices do not change: $M_t=M_{t-1}$ and $U_t = U_{t-1}$. Otherwise, we extend the partial matching by adding an edge we just created to $M_{t-1}$, that is, $M_t = M_{t-1} \cup \{ u_t v_t \}$ and $U_t = U_{t-1} \setminus \{ u_t, v_t \}$. For every green vertex $x \in V[M_{t-1}]$, if it is adjacent to either $u_t$ or $v_t$ by a green edge, then we uncolour this green edge, uncolour $x$ (from green), and uncolour the mate of $x$ in $M_{t-1}$ (from red).
\item [Case~(b):] No square lands on a vertex from $U_{t-1}$ but at least one square lands on a red vertex in $V[M_{t-1}]$. We arbitrarily select one of such red vertices to be $u_t$, and let $v_t$ be a uniformly random vertex in $U_{t-1}$. Let $x \in V[M_{t-1}]$ be the mate of $u_t$ in $M_{t-1}$. Let $y$ be the (unique) vertex in $U_{t-1}$ which is adjacent to $x$ by a green edge. If $v_t = y$, then the matching and the set of unsaturated vertices do not change: $M_t=M_{t-1}$ and $U_t = U_{t-1}$. Otherwise, let $M_t$ be the matching obtained by augmenting along the path $yxu_tv_t$, that is, $M_t = (M_{t-1} \setminus \{ xu_t \}) \cup \{yx, u_tv_t\}$. Let $U_t = U_{t-1} \setminus \{y, v_t\}$. Finally, update the green vertices and edges and the red vertices accordingly as in Case~(a).
\item [Case~(c):] No square lands on a vertex from $U_{t-1}$ nor on a red vertex in $V[M_{t-1}]$ but at least one square lands on an uncoloured vertex in $V[M_{t-1}]$. We arbitrarily select one of such uncoloured vertices to be $u_t$, and let $v_t$ be a uniformly random vertex in $U_{t-1}$. Colour the edge $u_tv_t$ and the vertex $u_t$ green and colour the mate of $u_t$ in $M_{t-1}$ red. The matching is not affected, that is, $M_t = M_{t-1}$ and $U_t = U_{t-1}$.
\item [Case~(d):] All squares land on green vertices. Let $v_t$ be an arbitrary vertex in $[n]$. The edge $u_tv_t$ will not be used in the process of constructing a perfect matching. Let $M_t = M_{t-1}$ and $U_t = U_{t-1}$.
\end{itemize}
As it was done in~\cite{gao2022perfect}, we terminate the algorithm prematurely (in order to avoid technical issues with the DEs method that will be used) at the step when $|U_t|$ becomes at most $\eps n$ where $\eps = 10^{-14}$. To saturate the remaining unsaturated vertices, the clean-up algorithm can be used that was introduced and analyzed in~\cite{gao2022perfect}. This algorithm was used for the original semi-random process (for $k=1$) but, by monotonicity, also applies for any $k \in \Nn$. It is not as efficient as the one described above but it may be easily analyzed. It was proved in~\cite{gao2022perfect} that a.a.s.\ this algorithm takes at most $100\sqrt{\eps} n = 10^{-5}n$ steps, which is numerically insignificant. 

\begin{theorem}
Let $k \in \Nn$. Then, $\tau_{\texttt{PM}}(k) \le u_{k} + 10^{-5}$, where $u_{k} \ge 1/2$ is a constant that is derived from a system of differential equations. 
The numerical bounds for $1 \le k \le 10$ are presented in Table~\ref{tab:perfect_matching}.
\end{theorem}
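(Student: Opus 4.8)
The plan is to apply the Differential Equation Method (Theorem~\ref{thm:differential_equation_method}) to the fully adaptive strategy described above, tracking the single random variable $U(t) := |U_t|$, the number of unsaturated vertices at time $t$. By symmetry of the construction, once we know $U(t)$ the matching edges and the coloured structure are statistically homogeneous, so a one-variable analysis should suffice; the auxiliary red/green structure is only used to explain why the step probabilities have the claimed form. First I would compute $\E[U(t+1) - U(t) \mid \scr{F}_t]$ as a function of $u := U(t)/n$. The key observation is that $U$ decreases by exactly $2$ in Cases~(a) and~(b) (unless the random circle $v_t$ happens to land on the one forbidden vertex, an event of probability $O(1/n)$ which contributes to the $\delta$ error term), and does not change in Cases~(c) and~(d). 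So the drift is $-2$ times the probability that at least one of the $k$ squares lands in $U_{t-1}$ or on a red vertex. With $u = U(t)/n$ unsaturated vertices and, crucially, exactly $u n$ red vertices as well — each unsaturated vertex $y$ that is the green-neighbour of some $x$ corresponds to a red mate, and one shows the green edges form a perfect matching between $U_t$ and the red vertices, so the number of red vertices equals $|U_t|$ up to lower-order terms — the probability that a given square is "useful" (lands in $U_{t-1}$ or on a red vertex) is $2u + O(1/n)$, assuming $2u \le 1$. Hence the drift is $F(u) = -2\bigl(1 - (1-2u)^k\bigr)$, giving the differential equation $u'(x) = -2\bigl(1 - (1-2u)^k\bigr)$ with $u(0) = 1$.

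The second step is to verify the hypotheses of Theorem~\ref{thm:differential_equation_method}. The Boundedness Hypothesis holds deterministically with $\beta = 2$ and $\gamma = 0$, since $U$ changes by at most $2$ each step. The Trend Hypothesis holds with $\delta = O(1/n)$ because the only discrepancy between the true conditional expectation and $F(U(t)/n)$ comes from the $O(1/n)$ chance that the uniformly random circle $v_t$ collides with $u_t$ (Case~(a)) or with $y$ (Case~(b)), together with the claim that the red-vertex count equals $|U_t|$ exactly — any transient defect there is also $O(1/n)$ and can be absorbed. The function $F$ is Lipschitz and bounded on a suitable domain $\scr{D} = \{(x,u) : -\eps < x < 2, \eps/2 < u < 1 + \eps\}$; the initial condition is exact ($U(0) = n$). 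Running the DE until $u$ drops to $\eps = 10^{-14}$ gives the stopping point, and Theorem~\ref{thm:differential_equation_method} yields that a.a.s.\ $U(t) = u(t/n)\,n + o(n)$ throughout. We then set $u_k := \int_0^{\eps} \frac{du}{2(1-(1-2u)^k)}$ evaluated appropriately — i.e.\ $u_k$ is the time $x$ at which $u(x) = \eps$ — and note $u_k \to \tfrac12$ as $\eps \to 0$ is not claimed; rather $u_k \ge 1/2$ since $F(u) \ge -2$ always, so $u$ cannot decrease faster than linearly with slope $-2$, forcing $x \ge (1-\eps)/2$ when $u = \eps$. Finally, after the main phase we invoke the clean-up algorithm from~\cite{gao2022perfect}, which a.a.s.\ saturates the remaining $\le \eps n$ vertices in at most $100\sqrt{\eps}\,n = 10^{-5}n$ further steps, giving $\tau_{\texttt{PM}}(k) \le u_k + 10^{-5}$.

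The main obstacle — and the step requiring the most care — is justifying rigorously that the number of red vertices stays equal to $|U_t|$ (up to $O(1)$) throughout Cases~(a)–(d), including the uncolouring that happens when a newly matched edge was incident to a previously green edge. One must check that every red vertex has exactly one green mate-partner in $U_t$ and vice versa, that Case~(c) always finds an uncoloured vertex to colour (which it does as long as not all of $V[M_{t-1}]$ is green, a non-issue while $u$ is bounded below by $\eps$), and that the uncolouring in Cases~(a)/(b) removes at most $O(1)$ red vertices per step. Once this bookkeeping is in place, the probability computation and the DE analysis are routine, following the template of~\cite{gao2022perfect} with $(1-p)$ replaced by $(1-p)^k$ in the "no useful square" event. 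The numerical values in Table~\ref{tab:perfect_matching} are then obtained by integrating the ODE in Maple.
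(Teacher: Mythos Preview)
Your central claim --- that the number of red vertices equals $|U_t|$ up to $O(1)$ --- is false, and with it the one-variable reduction collapses. At time $0$ there are $n$ unsaturated vertices and zero red vertices; more generally, red vertices are created only in Case~(c), one at a time, whereas $|U_t|$ starts at $n$ and drops by $2$ in Cases~(a) and~(b). The green edges do \emph{not} form a perfect matching between $U_t$ and the green (or red) vertices: each green vertex in $V[M_t]$ carries exactly one green edge into $U_t$, but a given unsaturated vertex may be the endpoint of $0,1,2,\ldots$ green edges, so $R(t)$ (which equals the number of green edges) bears no fixed relation to $|U_t|$. Your proposed DE $u' = -2(1-(1-2u)^k)$ is also inconsistent with the initial condition $u(0)=1$: for even $k$ it gives $u'(0)=0$ (the process never moves), and for odd $k$ it gives $u'(0)=-4$, faster than the trivial bound of $-2$.

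The paper therefore tracks the pair $(X(t),R(t))$ and derives a genuine two-dimensional system
\[
x' = 2\bigl(1-(x-r)^k\bigr), \qquad
r' = -\frac{2r\bigl(1-(x-r)^k\bigr)}{1-x} - x^k + 2(x-r)^k - r^k,
\]
with $x(0)=r(0)=0$; the drift of $U(t)=n-X(t)$ depends on $R(t)$ through the term $(x-r)^k$, which cannot be eliminated. The $r'$ equation also reflects the issue you yourself flagged: when two unsaturated vertices are absorbed in Case~(a) or~(b), the number of green edges destroyed is not $O(1)$ deterministically but is random with mean $2R(t)/(n-X(t))$, since the green-edge endpoints in $U_t$ are conditionally uniform. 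This is why the Boundedness Hypothesis for $R(t)$ needs $\beta = (\log n)^2$ and a Chernoff tail estimate (so $\gamma = O(n^{-2})$), not $\beta = 2$, $\gamma = 0$. Once the two-variable system is set up, the remainder of your outline --- Lipschitz check on a domain bounded away from $x=1$, application of Theorem~\ref{thm:differential_equation_method}, and the clean-up via~\cite{gao2022perfect} --- is correct and matches the paper.
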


\begin{proof}
To analyze the above algorithm, we introduce the following random variables. Let $X(t)$ be the number of saturated vertices, that is, $X(t) = |V(M_t)| = 2|M_t|$. Let $R(t)$ be the number of red vertices in $V[M_t]$. The algorithm is designed in such a way that $R(t)$ is also equal to the number of green vertices, and thus equal to the number of green edges. We will use the DEs method to analyze the behaviour of the sequence $H_t := (X(t), R(t))$ but we will not encompass the full history of the process. For convenience, we will condition on less information and do not reveal the placement of circles associated with green edges; their placements amongst the unsaturated vertices remain distributed uniformly at random. 

Let us start with analyzing $X(t)$. Let $\mathcal{A}^i_{t+1}$ be the event that Case~(i) occurred at step $t+1$. Note that at step $t$, the set of vertices is partitioned into unsaturated vertices, red vertices in $V[M_t]$, uncoloured vertices in $V[M_t]$, and green vertices in $V[M_t]$. The algorithm makes a greedy selection of squares from these classes. There are $X(t)$ vertices that are \emph{not} unsaturated, $X(t)-R(t)$ of them are \emph{not} red, and $R(t)$ of the remaining ones are \emph{not} uncoloured (that is, are green). It follows then that
\begin{eqnarray*}
\Prob( \mathcal{A}^a_{t+1} ) &=& 1 - \left( \frac {X(t)}{n} \right)^k \\
\Prob( \mathcal{A}^b_{t+1} ) &=& \left( \frac {X(t)}{n} \right)^k - \left( \frac {X(t)-R(t)}{n} \right)^k \\
\Prob( \mathcal{A}^c_{t+1} ) &=& \left( \frac {X(t)-R(t)}{n} \right)^k - \left( \frac {R(t)}{n} \right)^k \\
\Prob( \mathcal{A}^d_{t+1} ) &=& \left( \frac {R(t)}{n} \right)^k.
\end{eqnarray*}
Since $X(t)$ increases by 2 when $\mathcal{A}^a_{t+1}$ or $\mathcal{A}^b_{t+1}$ occur, and does not change otherwise, we get that
\begin{equation}
\E [ X(t+1) - X(t) ~|~ H_t ] = 2 \cdot \left( 1 - \left( \frac {X(t)-R(t)}{n} \right)^k \right) + \bigo (1/n). \label{eq:trendX_perfet_matching}
\end{equation}
The term $\bigo (1/n)$ corresponds to the probability that $v_{t+1}$ is the same as $u_{t+1}$ (in Case~(a)) or the same as $y$ (in Case~(b)).

The analysis of $R(t)$ is slightly more complicated. If $\mathcal{A}^a_{t+1}$ occurs, then two vertices in $U_t$ become saturated after the augmentation. Since the endpoints of the set of green edges (those with circles) are uniformly distributed in $U_t$, the expected number of green edges incident with at least one of the two vertices is equal to $2R(t)/(n-X(t))$. The other endpoints of these green edges become uncoloured from green after the augmentation which, in turn, forces their mates to become uncoloured from red. If $\mathcal{A}^b_{t+1}$ occurs, then the situation is similar, except that $u_{t+1}$ is first uncoloured from red and its mate is uncoloured from green. If $\mathcal{A}^c_{t+1}$ occurs, then a new green vertex is created which, in turn, makes its mate red. Finally, If $\mathcal{A}^d_{t+1}$ occurs, then there is no change to R(t). It follows that
\begin{eqnarray}
\E [ R(t+1) - R(t) ~|~ H_t ] &=& \Prob( \mathcal{A}^a_{t+1} ) \cdot \left( - \frac {2R(t)}{n-X(t)} \right) + \Prob( \mathcal{A}^b_{t+1} ) \cdot \left( -1 - \frac {2(R(t)-1)}{n-X(t)} \right) \nonumber \\
&& + \Prob( \mathcal{A}^c_{t+1} ) + \bigo (1/n) \nonumber \\
&=& - \left( \Prob( \mathcal{A}^a_{t+1} ) + \Prob( \mathcal{A}^b_{t+1} ) \right) \cdot \frac {2R(t)}{n-X(t)} - \Prob( \mathcal{A}^b_{t+1} ) + \Prob( \mathcal{A}^c_{t+1} ) + \bigo (1/n) \nonumber \\
&=& - \left( 1 - \left( \frac {X(t)-R(t)}{n} \right)^k \right) \cdot \frac {2R(t)}{n-X(t)} \nonumber \\
&& - \left( \frac {X(t)}{n} \right)^k + 2 \left( \frac {X(t)-R(t)}{n} \right)^k - \left( \frac {R(t)}{n} \right)^k + \bigo (1/n). \label{eq:trendR_perfet_matching}
\end{eqnarray}
By writing $x(s) = X(sn)/n$ and $r(s) = R(sn)/n$, we have that
\begin{eqnarray}
x' &=& 2 (1-(x-r)^k), \nonumber \\
r' &=& \frac {-2(1-(x-r)^k)r}{1-x} - x^k +2 (x-r)^k - r^k, \label{eq:DEs_perfect_matching}
\end{eqnarray}
with the initial conditions $x(0)=0$ and $r(0)=0$. 

Let us now check that the assumptions of the DEs method are satisfied. Let $\eps > 0$ be an arbitrarily small constant. Note that the right hand sides of~(\ref{eq:DEs_perfect_matching}) are continuous, bounded, and Lipschitz in the connected open set 
$$
\scr{D} = \{ (s,x,r) : -\eps < s < 2, -\eps < x < 1-\eps/3, -\eps < r < 1+\eps \},
$$
which contains the point $(0, x(0), r(0)) = (0, 0, 0)$. There is no error in the `Initial Condition' so it holds with any $\lambda = \Omega(\delta)$. The `Trend Hypothesis' holds with $\delta = \bigo(1/n)$ (see~(\ref{eq:trendX_perfet_matching}) and~(\ref{eq:trendR_perfet_matching})) so any $\lambda = \Omega(1/n)$ works. Trivially, $|X(t+1)-X(t)| \le 2$ for every $t \le T_{\scr{D}}$. To estimate $|R(t+1)-R(t)|$, first note that for any unsaturated vertex, the expected number of green vertices that are adjacent to it is equal to $R(t) / |U_t| = R(t) / (n - X(t)) \le 1 / (\eps / 3) = \bigo(1)$. It follows from Chernoff's bound that with probability $\bigo(n^{-2})$, for any $1 \le t \le T_{\scr{D}} \le 2n$ we have $|R(t+1)-R(t)| \ge (\log n)^2$. Hence, the `Boundedness Hypothesis' holds with $\gamma = \bigo(n^{-2})$ and $\beta= (\log n)^2$. It follows from Theorem~\ref{thm:differential_equation_method}, applied with $\lambda = n^{-1/4}$, $\gamma = \bigo(n^{-2})$ and $\beta= (\log n)^2$, that the differential equations~(\ref{eq:DEs_perfect_matching}) with the given initial conditions have a unique solution that can be extended arbitrarily close to the boundary of $\scr{D}$ and, more importantly, a.a.s.\ for every $t$ such that $t/n < \sigma$, where $\sigma$ is the supremum of $s$ where $x(s) \le 1 - \eps / 2$ and $s < 2$, 
$$
\max \Big\{ |X(t) - x(t/n) n|, |R(t) - r(t/n) n| \Big\} = \bigo ( \lambda n ) = o(n).
$$
Numerical calculations show that $x(s)$ reaches $1-\eps/2$ before $s$ reaches $2$. This gives us a bound (that holds a.a.s.) for the number of steps for the process to reach at most $\eps n$ unsaturated vertices and the clean-up algorithm can deal with the rest. 
\end{proof}

\subsection{Large value of $k$}\label{sec:pm_large}

Let us show that $\tau_{\texttt{PM}}(k) \to 1/2$ as $k \to \infty$.

\begin{theorem}\label{thm:largek_pm}
The following bounds hold:
$$
\frac {1}{2} \le \tau_{\texttt{PM}}(k) \le \frac {1}{2} \Big( 1 + \bigo( \sqrt{ \log k / k} ) \Big).
$$
\end{theorem}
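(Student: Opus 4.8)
The plan is to prove the two inequalities separately. The lower bound $\tau_{\texttt{PM}}(k) \ge 1/2$ is immediate: a perfect matching has $n/2$ edges, each round adds at most one edge, so at least $n/2$ rounds are needed regardless of strategy, and this holds for every $n$ and every $q < 1$. For the upper bound, I would mimic the two-phase argument used for the minimum-degree result (Theorem for large $\ell$), replacing the role of "$\ell/2$ circles per vertex" with "one circle per vertex'' and using the $k$ choices to place \emph{squares} almost perfectly rather than circles.

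Here is the structure of the upper bound. In Phase~1, lasting roughly $n/2$ rounds, use a greedy rule on the squares: maintain a target set $U$ of vertices that still need to receive a circle (initially $U = [n]$), and in each round select a square $u_t$ that lands in $U$ if one of the $k$ offered squares does, placing the circle $v_t$ on an arbitrary uncovered vertex of $U$ (or, to build a matching directly, pair two fresh vertices of $U$). Because $|U|$ stays $\Theta(n)$ during almost all of Phase~1, the probability that none of the $k$ i.i.d.\ squares hits $U$ is at most $(1-|U|/n)^k$, which is small once $k$ is large; so $|U|$ shrinks by the ideal amount in all but a $\bigo(\log k/k)$ fraction of rounds. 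More precisely, running for $(1/2)(1 + c\log k/k)n$ rounds with a suitable constant $c$, a Chernoff bound on the number of "wasted'' rounds (those where all squares miss $U$, each occurring with probability at most $1/k$ once $|U| \ge n\log k/k$) shows that a.a.s.\ we reach $|U| \le n\log k/k$. Phase~2 then deterministically finishes the remaining at most $n\log k/k$ vertices in at most that many further rounds (each round we can always place a needed edge). Summing, the total is $(n/2)(1 + \bigo(\sqrt{\log k/k}))$ — the $\sqrt{}$ appearing if one optimizes constants as in the $\ell$-large proof, though even the cruder $\bigo(\log k/k)$ bound suffices for the stated conclusion $\tau_{\texttt{PM}}(k) \to 1/2$.

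To actually produce a perfect matching (not just a spanning subgraph with some degree condition) the cleanest route is: in Phase~1, whenever at least one square lands in the current unsaturated set $U_{t-1}$, select such a square $u_t$ and let $v_t$ be a uniformly random vertex of $U_{t-1}\setminus\{u_t\}$, adding $u_tv_t$ to the matching; this is exactly Case~(a) of the algorithm in Subsection~\ref{sec:pm_upperbound}, and here $|U_t| = |U_{t-1}| - 2$ (up to the negligible coincidence $v_t = u_t$). So we may simply invoke that algorithm, or a stripped-down version of it, and observe that with $k$ large Case~(a) occurs in all but a $\bigo(\log k/k)$ fraction of rounds; the differential-equation bookkeeping of red/green vertices becomes unnecessary since in this regime we essentially never leave Case~(a). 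Once $|U_t| \le n\log k/k$, switch to a trivial clean-up: repeatedly wait for a square to hit $U_t$ (or even just directly match two waiting vertices using any square, ignoring the randomness by placing the circle on a waiting vertex) — this costs at most $\bigo(n\log k/k)$ additional rounds a.a.s.

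The main obstacle is nothing deep — it is just the careful stochastic domination in Phase~1: bounding the number of rounds in which all $k$ squares miss the shrinking target set, uniformly over the phase, and checking that the Chernoff estimate still applies near the end of the phase when $|U_t|$ has dropped to order $n\log k/k$ (which is why we cut over to Phase~2 exactly at that threshold, just as in the $k$-large minimum-degree proof). I expect the argument to be a near-verbatim adaptation of that proof, with "degree $i-1$ vertices'' replaced by "unsaturated vertices'' and the phase count $\ell$ replaced by a single phase; so I would present it compactly, referencing the earlier computation rather than repeating the Chernoff details.
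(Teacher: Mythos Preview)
Your Phase~1 is fine and matches the paper's: run the greedy ``if some square is unsaturated, match it to a random unsaturated vertex'' rule for $n/2$ rounds; a Chernoff bound on the number of rounds in which all $k$ squares miss the unsaturated set shows that a.a.s.\ at most $n\log k/k$ vertices remain unsaturated.

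The gap is in Phase~2. Neither of your ``trivial clean-ups'' works. First, ``directly match two waiting vertices using any square'' is simply not an available move: every edge has the selected square as one endpoint, and once $|U_t|\le n\log k/k$, with probability $1-o_k(1)$ \emph{all} $k$ offered squares are already saturated, so no matching edge between two unsaturated vertices can be created that round. Second, ``repeatedly wait for a square to hit $U_t$'' does not finish in $\bigo(n\log k/k)$ rounds: when $|U_t|=2j$ the hitting probability is $1-(1-2j/n)^k\approx 2jk/n$, so the expected total wait is on the order of $\sum_{j=1}^{|U|/2} n/(2jk)\approx (n/2k)\ln|U|$, which for fixed $k$ is $\Theta(n\log n)$, i.e.\ super-linear in $n$. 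This does not even give a finite bound on $\tau_{\texttt{PM}}(k)$, let alone $\bigo(\log k/k)$.

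The missing idea is that the clean-up must work through the already-built matching via augmenting paths: a square on a saturated vertex can still help, by creating an edge into $U_t$ that later participates in an augmentation. The paper invokes the clean-up algorithm of~\cite{gao2022perfect} (valid for $k=1$, hence for all $k$ by monotonicity), which absorbs $\eps n$ leftover vertices in $\bigo(\sqrt{\eps}\,n)$ further rounds. With $\eps=\log k/k$ this yields the $\bigo(\sqrt{\log k/k})$ term in the statement --- and this is precisely where the square root comes from, not from ``optimizing constants'' in a Chernoff bound as in the large-$\ell$ minimum-degree proof. The analogy with that proof breaks down exactly here: for minimum degree the circle alone suffices to raise a deficient vertex's degree, but for a matching edge you need control of \emph{both} endpoints.
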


\begin{proof}
The lower bound is trivial. Deterministically, a perfect matching cannot be created in less than $n/2$ rounds.

Let us now move to the upper bound. During the first phase that lasts for $n/2$ steps, we will consider the following greedy algorithm. If at least one square lands on an unsaturated vertex, then a partial matching is extended; otherwise, an edge that is created at this step is simply ignored and will not be used in the process of constructing a perfect matching.

Let $Y(t)$ be the number of unsaturated vertices at time $t$. We will show that a.a.s.\ after $n/2$ steps, all but at most $\log k / k$ fraction of vertices are saturated, that is, a.a.s.\ $Y(n/2) \le n \log k / k$. For a contradiction, suppose that $Y(n/2) > n \log k / k$. It implies that at any step $t$, $1 \le t \le n/2$, $Y(t) \ge Y(n/2) > n \log k / k$ and so a partial matching cannot be extended at time $t$ with probability 
$$
\left( 1 - \frac {Y(t)}{n} \right)^k \le \exp \left( - \frac {kY(t)}{n} \right) < \exp( - \log k ) = 1/k.
$$
Hence, we expect at most $n/(2k)$ steps failing to extend the matching and so a.a.s.\ at most $n/(2k) + o(n)$ steps do that by Chernoff's bound. We get that a.a.s.\
$$
Y(n/2) \le 2 \cdot \left( \frac {n}{2k} + o(n) \right) = n/k + o(n) \le n \log k / k
$$
assuming that $k \ge 3$ (which we may, since we aim for a result that holds for $k$ large enough). The desired contradiction implies that a.a.s.\ at the end of the first phase, there are at most $n \log k / k$ unsaturated vertices.

During the second phase, the clean-up algorithm analyzed in~\cite{gao2022perfect} can be used to finish the job and to saturate the remaining $\eps = \eps(k) = \log k / k$ fraction of vertices. It was proved in~\cite{gao2022perfect} that a.a.s.\ this algorithm takes at most $100\sqrt{\eps} n = \bigo ( n \sqrt{ \log k / k } )$ steps, which finishes the proof of the theorem.
\end{proof}

\section{Hamiltonian Cycles}\label{sec:cycles}

In this section, we concentrate on another classical property, namely, the property of having a Hamiltonian cycle, which we denote by ${\tt HAM}$. It is known that a.a.s.\ the $3$-out process we discussed in the previous section is Hamiltonian~\cite{bohman2009hamilton}. As already mentioned earlier, the semi-random process can be coupled with the $\ell$-out process~\cite{beneliezer2019semirandom} (for any $\ell \in \Nn$) and so we get that $\tau_{\tt HAM} \le 3$. A new upper bound was obtained in~\cite{gao2020hamilton} in terms of an optimal solution to an optimization problem whose value is believed to be $2.61135$ by numerical support. 

The upper bound on $\tau_{\tt HAM}$ of $3$ obtained by simulating the $3$-out process is \emph{non-adaptive}. That is, the strategy does \textit{not} depend on the history of the semi-random process. The above mentioned improvement proposed in~\cite{gao2020hamilton} uses an adaptive strategy but in a weak sense. The strategy consists of 4 phases, each lasting a linear number of rounds, and the strategy is adjusted \emph{only} at the end of each phase (for example, the player might identify vertices of low degree, and then focus on putting circles on them during the next phase). 

In~\cite{gao2022fully}, a fully adaptive strategy was proposed that pays attention to the graph $G_t$ and the position of $u_t$ for every single step $t$. As expected, such a strategy creates a Hamiltonian cycle substantially faster than the weakly adaptive or non-adaptive strategies, and it allows to improve the upper bound from $2.61135$ to $2.01678$. One more trick was observed recently which further improves the upper bound to $1.84887$~\cite{frieze2022hamilton}. After combining all the ideas together, the currently best upper bound is equal to $1.81701$~\cite{newpreprint}. In this paper, we adjust a slightly easier version of the algorithm from~\cite{frieze2022hamilton} to deal with the $k$-process and present the corresponding upper bounds (see Subsection~\ref{sec:ham_upperbound}).

Let us now move to the lower bounds. As observed in the initial paper introducing the semi-random process~\cite{beneliezer2019semirandom}, if $G_t$ has a Hamiltonian cycle, then $G_t$ has minimum degree at least 2. Thus, $\tau_{\tt HAM} \ge  \tau_{\scr{P}_2}  = \ln 2+ \ln(1+\ln2) \ge 1.21973$, where $\scr{P}_2$ corresponds to the property of having the minimum degree at least $2$---see Section~\ref{sec:degree}. In~\cite{gao2020hamilton}, the lower bound mentioned above was shown to not be tight. However, the lower bound was only increased by $\eps = 10^{-8}$ and so numerically negligible. A better bound was obtained in~\cite{gao2022fully} (see also~\cite{newpreprint}) and now we know that $\tau_{\tt HAM} \ge 1.26575$. Adjusting the lower bound from~\cite{gao2022fully} seems challenging and technical so we only report trivial lower bounds using the results from Section~\ref{sec:degree}:  $\tau_{\texttt{HAM}}(k) \ge \tau_{\scr{P}_2}(k)$. The gap between the upper and lower bounds gets small as $k$ gets large. In fact, $\tau_{\texttt{HAM}}(k) \to 1$ as $k \to \infty$ (see Subsection~\ref{sec:ham_large}). 

\begin{table}[htp]
\caption{Hamilton Cycles---numerical upper and lower bounds of $\tau_{\texttt{HAM}}(k)$ for $1 \le k \le 10$. Stronger upper and lower bounds for $k=1$ follow from~\cite{newpreprint} and~\cite{gao2022fully} respectively.}
\begin{center}
\begin{tabular}{|c|c|c|c|c|c|c|}
\hline
& lower bound & upper bound & & lower bound & upper bound \\
\hline 
$k=1$ & 1.21974 (1.26575) & 1.87230 (1.81701) & $k=6$ & 1.05075 & 1.13325 \\
$k=2$ & 1.12498 & 1.39618 & $k=7$ & 1.04426 & 1.11534 \\
$k=3$ & 1.09081 & 1.26077 & $k=8$ & 1.03924 & 1.10180 \\
$k=4$ & 1.07184 & 1.19615 & $k=9$ & 1.03525 & 1.09115 \\
$k=5$ & 1.05947 & 1.15827 & $k=10$ & 1.03199 & 1.08254 \\
\hline
\end{tabular}
\end{center}
\label{tab:cycles}
\end{table}

\subsection{Upper bound for $\tau_{\texttt{HAM}}(k)$}\label{sec:ham_upperbound}

In this subsection, we adjust a slightly easier version of the algorithm from~\cite{frieze2022hamilton} to deal with the $k$-process for any $k \in \Nn$. For $k=1$, it yields a bound of $1.87230$ which is slightly worse than the one reported in~\cite{frieze2022hamilton} ($1.84887$) and in~\cite{newpreprint} ($1.81696$) but is easier to analyze. For $k \ge 2$ the difference would be even smaller, but with substantially larger effort one may do it. 

The algorithm builds a path that eventually becomes a Hamiltonian path and then it is turned into a Hamiltonian cycle. Let $X(t)$ be the number of vertices that belong to the path $P_t$ that is present at time $t$. Some of the vertices outside of $P_t$ will be matched with each other and will form a \textbf{matching}. Let $Y(t)$ be the number of vertices outside of $P_t$ that are \textbf{matched}. The remaining vertices (not on the path $P_t$ nor matched) are \textbf{unsaturated}. Let $U(t)$ be the number of unsaturated vertices. 

It is convenient to colour some of our vertices and edges red. Vertices on the path $P_t$ are \textbf{red} if they are adjacent to precisely one red edge of $G_t$ (this edge will not belong to $P_t$). Let $R(t)$ be the number of red vertices. It will be useful to maintain the property that no two red vertices are at the path distance less than 3 from each other. Assume then that this property is satisfied at time $t$. Clearly, there are $2R(t)+\bigo(1)$ vertices of $P_t$ that are at distance 1 from the set of red vertices; we colour such vertices \textbf{green}. (Note that it is possible that one or both of the endpoints of the path are red and such vertices have only one neighbour on the path. This explains additional $\bigo(1)$ term.) Moreover, there are at most $2R(t)$ vertices that are at distance 2 from the set of red vertices and are not green (nor red, of course, as no two red vertices are at distance 2 from each other); we call them \textbf{useless}. The reason to introduce useless vertices is to make sure that each red vertex (except possibly two red vertices at the end of the path) is surrounded by unique two green vertices. If, at some point of the process, the square lands on a green vertex, then the player can extend the path by placing a circle at the endpoint of the associated red edge that does not belong to the path. To simplify the analysis, we arbitrarily select more vertices on the path that are not red nor green and call them useless so that there are $2R(t)+\bigo(1)$ useless vertices. Vertices on $P_t$ that are not coloured nor useless are called \textbf{permissible}. 

Note that in each step of the process the set of vertices is partitioned into 6 sets: red vertices, green vertices, useless vertices, permissible vertices, matched vertices, and unsaturated vertices. We will track the length of the path $P_t$ (random variable $X(t)$), the number of red vertices (random variable $R(t)$) and the number of matched vertices (random variable $Y(t)$). By design, the number of green and useless vertices are both equal to $2R(t)+\bigo(1)$ so there is no need to track them. Similarly, the number of permissible vertices is equal to $X(t) - 5R(t)+\bigo(1)$. Finally, the number of unsaturated vertices is equal to $n - X(t) - Y(t)$.

\medskip

Suppose that at time $t$, $k$ squares land on vertices $u^1_t, \ldots, u^k_t$. We consider a few cases. The algorithm performs the first case that holds.
\begin{itemize}
\item [Case~(a):] At least one square lands on an unsaturated vertex. We arbitrarily select one of them to be $u_t$ and let $v_t$ be a uniformly random unsaturated vertex. If $u_t = v_t$, then we do nothing; otherwise, we extend the partial matching by adding the edge $u_t v_t$ we just created to it. 
\item [Case~(b):] At least one square lands on a matched vertex. We arbitrarily select one of the matched vertices to be $u_t$ and let $v_t$ be one of the two endpoints of the path. We greedily extend $P_{t-1}$ by adding $v_t u_t$ and the edge containing $u_t$ from the matching to the path. If some red vertex $x$ is adjacent to either of the two absorbed vertices by a red edge, then we uncolour this red edge and uncolour $x$. This, in turn, uncolours green neighbours of $x$.
\item [Case~(c):] At least one square lands on a green vertex. We arbitrarily select one of these green vertices to be $u_t$ and let $y$ be the unique red neighbour of $u_t$. We augment $P_{t-1}$ via the unique red edge $yz$. If $z$ is unsaturated (sub-case~(c')), then we let $v_t = z$, add edges $u_t v_t = u_t z$, $v_t y = z y$, and remove edge $u_t y$ from $P_{t-1}$ to form $P_t$. On the other hand, if $z$ is matched to vertex $q$ (sub-case~(c'')), then we let $v_t = q$, add edges $u_t v_t = u_t q$, $q z = v_t z$, $z y$, and remove edge $u_t y$. If some red vertex $x$ is adjacent to the absorbed vertex $z$ (or the absorbed vertices $z$ and $q$ in the second sub-case) by a red edge, then we uncolour this red edge and uncolour $x$. As before, this uncolours green neighbours of $x$.
\item [Case~(d):] At least one square lands on a permissible vertex. We arbitrarily select one of these vertices to be $u_t$. Then, we choose $v_t$ uniformly at random amongst matched and unsaturated vertices, and colour $u_t v_t$ red. This case creates one red vertex, namely, vertex $u_t$, and two green vertices (or one if $u_t$ is one of the endpoints of the path). 
\item [Case~(e):] All squares land on useless or red vertices. In this case, we choose $v_t$ arbitrarily and interpret the algorithm as passing on this round, meaning the edge $u_t v_t$ will not be used to construct a Hamiltonian cycle. 
\end{itemize}

The analysis of the above algorithm is the main ingredient of the proof of the next result. Let us note that one may consider different orders of the above five cases yielding $5! = 120$ greedy algorithms. We selected the order that gives the strongest upper bound. 

\begin{theorem}
Let $k \in \Nn$. Then, $\tau_{\texttt{HAM}}(k) \le u_{k}$, where $u_{k} \in [1,3)$ is a constant that is derived from a system of differential equations. The numerical bounds for $1 \le k \le 10$ are presented in Table~\ref{tab:cycles}.
\end{theorem}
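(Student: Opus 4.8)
The plan is to follow the now-familiar template: set up the bounded random variables, write down the conditional one-step expectations, package them as a system of differential equations, verify the hypotheses of Theorem~\ref{thm:differential_equation_method}, and then read off the stopping time from the numerical solution of the system. Concretely, track $H_t := (X(t), R(t), Y(t))$, where $X(t)$ is the length of the current path, $R(t)$ the number of red vertices, and $Y(t)$ the number of matched vertices; by the design of the algorithm the number of green vertices, useless vertices, permissible vertices and unsaturated vertices are all affine functions of these three (up to $\bigo(1)$ terms), so three variables suffice. As in the perfect-matching proof we will condition on less than the full history: we do not reveal the placements of the circles attached to red edges, so that conditionally those circles remain uniformly distributed among the matched and unsaturated vertices — this is what makes the expected number of red edges destroyed in an augmentation step computable as $\Theta(R(t)/(n-X(t)))$ or similar.

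The first substantive step is to compute $\Prob(\mathcal{A}^{(i)}_{t+1})$ for each Case~(i), exactly as in the perfect-matching argument: since the algorithm performs the first applicable case and the vertex classes partition $[n]$, these probabilities are telescoping differences of $k$-th powers of the relevant class-size ratios $X/n$, $(X+Y)/n$, etc. Next, for each random variable I would write the conditional expected one-step change. For $X(t)$: it increases by $2$ in Case~(b), by $2$ in sub-case~(c') and by $4$ in sub-case~(c'') (absorbing $z$ alone versus $z$ together with its mate $q$), splitting $\Prob(\mathcal{A}^{(c)})$ according to whether the far endpoint $z$ of the red edge is unsaturated or matched — and the probability that $z$ is matched is $Y(t)/(n-X(t)) + \bigo(1/n)$ because the circle of the relevant red edge is uniform among matched-and-unsaturated vertices. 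For $Y(t)$: it increases by $2$ in Case~(a), decreases by $2$ in Case~(b) (two matched vertices get absorbed into the path), decreases by $1$ or $2$ in Case~(c) depending on sub-case, and in Case~(d) one matched vertex may be picked as $v_t$, giving a $-\,\Prob(\mathcal{A}^{(d)})\cdot Y/(n-X)$-type term. For $R(t)$: it goes up by $1$ in Case~(d), down by $1$ in Case~(c) (the augmenting red edge is consumed), and in Cases~(b) and~(c) an expected $\Theta(R/(n-X))$ further red vertices are uncoloured because their red edges happened to be incident to the absorbed vertices. Collecting terms and writing $x(s)=X(sn)/n$, $r(s)=R(sn)/n$, $y(s)=Y(sn)/n$ yields the system, with initial conditions $x(0)=r(0)=y(0)=0$.

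For verification of the hypotheses I would argue exactly as in Subsection~\ref{sec:pm_upperbound}: choose a domain $\scr{D}$ of the form $\{(s,x,r,y): -\eps<s<3,\ -\eps<x<1-\eps',\ -\eps<r<1+\eps,\ -\eps<y<1+\eps\}$ on which the right-hand sides are continuous, bounded and Lipschitz (the only denominators are $1-x$, bounded away from $0$ on $\scr{D}$); the initial condition holds with no error; the trend hypothesis holds with $\delta=\bigo(1/n)$; the boundedness hypothesis holds with $\beta=(\log n)^2$ and $\gamma=\bigo(n^{-2})$, since the number of red edges incident to any given vertex is stochastically dominated by a Binomial with bounded mean (as $R(t)/(n-X(t))=\bigo(1)$ inside $\scr{D}$) and hence is $\bigo((\log n)^2)$ whp by Chernoff. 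Theorem~\ref{thm:differential_equation_method} then gives a.a.s.\ concentration of $(X(t),R(t),Y(t))$ around $(x(t/n),r(t/n),y(t/n))n$ up to the time the trajectory leaves $\scr{D}$. Finally one runs the algorithm until $U(t)\le\eps n$ unsaturated vertices remain and then invokes a clean-up procedure (as in~\cite{gao2022perfect,frieze2022hamilton}) to complete a Hamiltonian path and close it into a cycle in $\bigo(\sqrt{\eps}\,n)$ additional rounds; numerically $x(s)$ reaches $1-\eps$ before $s=3$, and the constant $u_k$ is that value of $s$ (plus the negligible clean-up contribution). I expect the main obstacle to be bookkeeping rather than conceptual: correctly accounting for the sub-case split in Case~(c) and for the various $\bigo(1)$ discrepancies (path endpoints being red, the approximate counts $2R(t)+\bigo(1)$ of green and useless vertices) when deriving the exact coefficients in the expected-change formulas, and making sure that the "no two red vertices within path-distance $3$" invariant is genuinely maintained so that the green/useless accounting stays valid throughout; a secondary technical point is justifying the reduced-information conditioning rigorously, i.e.\ that not exposing the red-edge circles really does keep them uniform on $U_t\cup(\text{matched})$ after each augmentation.
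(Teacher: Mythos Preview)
Your overall plan matches the paper's proof essentially step for step: track $(X(t),R(t),Y(t))$, compute the case probabilities as telescoping $k$-th powers, derive the expected one-step changes, pass to the system of DEs, verify the hypotheses of Theorem~\ref{thm:differential_equation_method} on a domain bounded away from $x=1$, and finish with a clean-up phase. The reduced-information conditioning and the Chernoff argument for the boundedness hypothesis are also exactly what the paper does.

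However, several of your one-step change formulas are off, and they are precisely the bookkeeping items you flagged as risky. In Case~(c) the path gains \emph{one} vertex in sub-case~(c$'$) (only $z$ is absorbed) and \emph{two} vertices in sub-case~(c$''$) ($z$ and its mate $q$), not $2$ and $4$; hence $\E[X(t+1)-X(t)\mid H_t]$ picks up $(1+Y(t)/(n-X(t)))\cdot\Prob(\mathcal A^c_{t+1})$, not a $2$-and-$4$ split. Correspondingly $Y(t)$ changes by $0$ in sub-case~(c$'$) and by $-2$ in sub-case~(c$''$), not by $-1$ and $-2$. In Case~(d) the circle $v_t$ is merely the far endpoint of a newly coloured red edge; nothing is unmatched, so $Y(t)$ is unchanged there --- drop your $-\Prob(\mathcal A^d)\cdot Y/(n-X)$ term. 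Two smaller points: the algorithm terminates when the path covers all but $\eps n$ vertices, i.e.\ when $n-X(t)\le\eps n$ (equivalently $U(t)+Y(t)\le\eps n$), not when $U(t)\le\eps n$ alone; and the clean-up you need here is the Hamiltonicity clean-up from~\cite{gao2022fully}, which absorbs the remaining vertices into the path and closes the cycle in $\bigo(\sqrt{\eps}\,n+n^{3/4}\log^2 n)$ further rounds. With these corrections your derivation yields exactly the paper's system~(\ref{eq:DEs_ham}).
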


\begin{proof}
Let $\mathcal{A}^i_{t+1}$ be the event that Case~(i) occurred at step $t+1$. It follows that
\begin{eqnarray*}
\Prob( \mathcal{A}^a_{t+1} ) &=& 1 - \left( \frac {X(t)+Y(t)}{n} \right)^k \\
\Prob( \mathcal{A}^b_{t+1} ) &=& \left( \frac {X(t)+Y(t)}{n} \right)^k - \left( \frac {X(t)}{n} \right)^k \\
\Prob( \mathcal{A}^c_{t+1} ) &=& \left( \frac {X(t)}{n} \right)^k - \left( \frac {X(t)-2R(t)+\bigo(1)}{n} \right)^k \\
\Prob( \mathcal{A}^d_{t+1} ) &=& \left( \frac {X(t)-2R(t)+\bigo(1)}{n} \right)^k - \left( \frac {3R(t) +\bigo(1)}{n} \right)^k \\
\Prob( \mathcal{A}^e_{t+1} ) &=& \left( \frac {3R(t)+\bigo(1)}{n} \right)^k.
\end{eqnarray*}

We first need to estimate the expected change in the three random variables we track. Let us denote $H_t = ( X(i), R(i), Y(i))_{0 \le i \le t}$. Note that $H_t$ does \emph{not} encompass the entire history of the random process after $t$ rounds (that is, $G_0, \ldots ,G_t$). This deferred information exposure permits a tractable analysis of the random positioning of $v_t$ when $u_t$ is red. In particular, as we only expose $R(t)$ instead of the exact locations of the red edges, their endpoints which are not on the path are random vertices in $[n] \setminus V(P_t)$. Similarly, as we only expose $Y(t)$ instead of the exact locations of the edges that form a matching, these edges have the same distribution (conditional on $H_t$) as first exposing the set of vertices in $[n] \setminus V(P_t)$, then uniformly selecting a subset of vertices in $[n]\setminus V(P_t)$ of cardinality $Y(t)$, and then finally taking a uniformly random perfect matching over the $Y(t)$ vertices (that is, pair the $Y(t)$ vertices into $Y(t)/2$ disjoint edges of the matching).

\medskip

We observe the following expected difference equations. Let us start from $X(t)$, which is the easiest to deal with. $X(t)$ changes only when case~(b) or case~(c) occurs; it increases by 2 in case~(b) and increases by 1 or 2 in case~(c). Conditioning on case~(c) occurring, since the endpoint of the red edge we augment via is a random vertex in $[n] \setminus V(P_t)$, sub-case~(c') occurs with probability $(n-X(t)-Y(t))/(n-X(t))$ and sub-case~(c'') occurs with probability $Y(t)/(n-X(t))$---we expect to absorb $1 + Y(t)/(n-X(t))$ vertices. We get that
\begin{equation}
\E [X(t+1) - X(t) \mid H_t] = 2 \cdot \Prob( \mathcal{A}^b_{t+1} ) + \left( 1 + \frac {Y(t)}{n-X(t)} \right) \cdot \Prob( \mathcal{A}^c_{t+1} ). \label{diff-X}
\end{equation}
Investigating the behaviour of $Y(t)$ is also relatively easy to do. $Y(t)$ increases by 2 in case~(a) and decreases by 2 in case~(b). In case~(c), it may decrease by 2 but only when the endpoint of a red edge is matched (sub-case~(c'')). We get that
\begin{equation}
\E [Y(t+1) - Y(t) \mid H_t] = 2 \cdot \Prob( \mathcal{A}^a_{t+1} ) - 2 \cdot \Prob( \mathcal{A}^b_{t+1} ) - 2 \cdot \Prob( \mathcal{A}^c_{t+1} ) \cdot \frac {Y(t)}{n-X(t)}. \label{diff-Y}
\end{equation}
The most challenging part is to understand the behaviour of $R(t)$. The contribution to the expected change comes from two sources. In case~(c) we augment via a red edge so one red vertex gets uncoloured and in case~(d) we create one red vertex. The second source is associated with the fact that when one or two vertices get absorbed into the path all red edges incident to them get uncoloured. The expected number of vertices that get absorbed is already computed in~(\ref{diff-X}). Each vertex that gets absorbed uncolours $(R(t)+\bigo(1))/(n-X(t))$ red vertices. We get that
\begin{eqnarray}
\E [R(t+1) - R(t) \mid H_t] &=& - \Prob( \mathcal{A}^c_{t+1} ) + \Prob( \mathcal{A}^d_{t+1} ) \nonumber \\
&& - \left( 2 \cdot \Prob( \mathcal{A}^b_{t+1} ) + \left( 1 + \frac {Y(t)}{n-X(t)} \right) \cdot \Prob( \mathcal{A}^c_{t+1} ) \right) \cdot \frac {R(t)+\bigo(1)}{n - X(t)} \nonumber \\
&=& - \frac {2 R(t)}{n - X(t)} \cdot \Prob( \mathcal{A}^b_{t+1} ) \label{diff-R} \\
&& - \left( \frac {(n-X(t)+Y(t)) \cdot (R(t)+\bigo(1))}{(n-X(t))^2} + 1 \right) \cdot \Prob( \mathcal{A}^c_{t+1} ) + \Prob( \mathcal{A}^d_{t+1} ). \nonumber 
\end{eqnarray}

After rescaling, (\ref{diff-X}), (\ref{diff-Y}), and (\ref{diff-R}), we get the following set of DEs:
\begin{eqnarray}
x' &=& 2 \Big( (x+y)^k - x^k \Big) + \left( 1 + \frac {y}{1-x} \right) \Big( x^k - (x-2r)^k \Big) \nonumber \\
y' &=& 2 \Big( 1 - (x+y)^k \Big) - 2 \Big( (x+y)^k - x^k \Big) - 2 \Big( x^k - (x-2r)^k \Big) \frac {y}{1-x} \nonumber \\
r' &=& - \frac {2r}{1-x} \Big( (x+y)^k - x^k \Big) - \left( \frac { (1-x+y)r }{ (1-x)^2 } + 1 \right) \Big( x^k - (x-2r)^k \Big) \label{eq:DEs_ham}\nonumber \\
&& + \Big( (x-2r)^k - (3r)^k \Big), 
\end{eqnarray}
with the initial conditions $x(0)=0$, $y(0)=0$, and $r(0)=0$.

As usual, we need to check that the assumptions of the DEs method are satisfied. Let $\eps > 0$ be an arbitrarily small constant. Initially, $X(0)=Y(0)=R(0)=0$ so the `Initial Condition' trivially holds. The right hand sides of all equations in~(\ref{eq:DEs_ham}) are continuous, bounded, and Lipschitz in the connected open set 
$$
\scr{D}_\eps = \{ (s, x, y, r) : - 1 < s < 3, -1 < x < 1 -\eps, -1 < y,r < 2 \},
$$
which contains the point $(0,0,0,0)$. (Note that we need to restrict the interval for $x$ due to a singularity point $x=1$.) Define 
\[
T_{\scr{D}_{\eps}}=\min\{t\ge 0: \ (t/n, X(t)/n, Y(t)/n, R(t)/n)\notin \scr{D}_{\eps}\}.
\]
The `Trend Hypothesis' holds with $\delta = \bigo( 1/n )$. The `Boundedness Hypothesis' requires more investigation. Random variables $X(t)$ and $Y(t)$ can change by at most 2 in each round. To estimate the maximum change for the random variable $R(t)$, we need to upper bound the number of red edges adjacent to any unsaturated or matched vertex $v$. Observe that at any step $t \le 3n$, since we have assumed that there are at least $\eps n$ unsaturated or matched vertices, the number of red edges adjacent to $v$ is stochastically upper bounded by $\Bin(3n, 1/(\eps n))$ with expectation $3/\eps$. It follows immediately from Chernoff's bound~(\ref{chern1}) that with probability $1-\bigo(n^{-3})$, the number of red vertices adjacent to $v$ is at most $\beta = \bigo( \log n)$. Hence, the `Boundedness Hypothesis' holds with probability at least $1- \gamma$ with $\gamma = \bigo(n^{-1})$ by taking the union bound over all $3n^2$ vertices and steps.

We conclude, based on Theorem~\ref{thm:differential_equation_method}, that for every $\tau >0$, a.a.s.\ for any $0 \le t \le (\sigma(\eps)-\tau)n$,
$$
\max \Big\{ |X(t) - x(t/n) n|, |Y(t) - y(t/n) n|, |R(t) - r(t/n) n| \Big\} = \bigo ( \lambda n ) = o(n),
$$
where $x, y, r$ are the unique solutions of the above DEs satisfying the desired initial conditions, and $\sigma(\eps)$ is the supremum of $s$ to which the solution can be extended before reaching the boundary of $T_{\scr{D}_{\eps}}$. As $\scr{D}_{\eps}\subseteq \scr{D}_{\eps'}$ for every $\eps>\eps'>0$, $\sigma(\eps)$ is monotonely nondecreasing as $\eps\to 0$. Thus, 
$$
u_k :=\lim_{\eps \to 0+} \sigma(\eps) 
$$
exists. It is obvious that $|Y(t)/n|$ and $|R(t)/n|$ are both bounded by 1 for all $t$ and thus, when $t/n$ approaches $u_k$, either $X(t)/n$ approaches 1 or $t/n$ approaches 3. If follows that a.a.s.\ either $X(t)>(1-\eps)n$ for all $t \ge (u_k-\delta)n$ or $u_k=3$. The above DEs do not have an analytical solution but numerical solutions show that $u_k \le u_1 < 1.87230$. Hence, by the end of the execution of the algorithm, there are $\eps n$ unsaturated or matched vertices remaining for some $\eps = o(1)$. 

The clean-up algorithm analyzed in~\cite{gao2022fully} (see also~\cite{newpreprint}) absorbs the remaining $\eps n = o(n)$ vertices into the path to form a Hamiltonian path, after which a Hamiltonian cycle is constructed. The whole procedure takes $\bigo (\sqrt{\eps}n + n^{3/4}\log^2 n) = o(n)$ further steps, which finishes the proof of the theorem. 
\end{proof}


\subsection{Large value of $k$}\label{sec:ham_large}

In this subsection, we show that $\tau_{\texttt{HAM}}(k) \to 1$ as $k \to \infty$. 

\begin{theorem}
The following bounds hold:
$$
1 \le \tau_{\texttt{HAM}}(k) \le 1 + \bigo( \sqrt{ \log k / k} ).
$$
\end{theorem}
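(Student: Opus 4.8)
The plan is to mirror the structure of the large-$k$ arguments already given for minimum degree and for perfect matchings. The lower bound $\tau_{\texttt{HAM}}(k) \ge 1$ is immediate: a Hamiltonian cycle on $n$ vertices has $n$ edges, so at least $n$ rounds are needed deterministically (equivalently, $\tau_{\texttt{HAM}}(k) \ge \tau_{\scr{P}_2}(k) \ge 1$, but the edge-count argument is cleaner). For the upper bound I would build a Hamiltonian path greedily in a first phase of roughly $n$ rounds, then invoke the clean-up algorithm from~\cite{gao2022fully} (available for all $k$ by monotonicity) to absorb the remaining $o(n)$ vertices and close the cycle.

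First I would describe the first-phase greedy algorithm: maintain a path $P_t$ together with a matching on some vertices outside the path, exactly as in the upper-bound algorithm of Subsection~\ref{sec:ham_upperbound} but stripped of the red/green machinery (we can afford to be wasteful here). Concretely: if at least one of the $k$ squares lands on an unsaturated vertex, extend the matching; else if at least one square lands on a matched vertex, absorb its matching edge onto an endpoint of $P_t$ (increasing the path length by $2$); otherwise pass. Let $Z(t) = n - X(t)$ be the number of vertices not on the path, where $X(t) = |V(P_t)|$. I would run this for $n$ rounds and show a.a.s.\ $Z(n) \le C n \sqrt{\log k / k}$ for a suitable constant.

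The key estimate, in the same spirit as Theorem~\ref{thm:largek_pm}: suppose for contradiction that $Z(t)$ stays above $m := n\sqrt{\log k/k}$ throughout $[0,n]$. Whenever $Z(t) \ge m$, with probability $(1 - Z(t)/n)^k \le \exp(-k Z(t)/n) \le \exp(-\sqrt{k \log k})$ no square is useful, and otherwise the path grows by at least $2$ or the matching grows (and matched vertices are themselves waiting to be absorbed two-for-one). A slightly careful bookkeeping — track $X(t) + Y(t) + 2\cdot(\text{matched})$, or just note that over $n$ rounds the number of rounds that fail to reduce $Z$ by at least $2$ on average is dominated by $\Bin(n, \exp(-\sqrt{k\log k})) + \bigo(1)$, which is $o(n)$ a.a.s.\ by Chernoff~(\ref{chern1}) — forces $Z(n) = o(n) < m$, a contradiction. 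Hence a.a.s.\ $Z(n) \le n\sqrt{\log k/k}$. One subtlety I would be careful about: matched-but-not-absorbed vertices also count toward $Z$, so I should argue that at the end of the phase the number of still-matched vertices is small (e.g.\ run the matching-absorption case with priority, so matched vertices are consumed quickly once unsaturated vertices run low), or simply fold them into the clean-up count since their number is at most $Z(n) = o(n)$ anyway.

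Finally, in the second phase apply the clean-up algorithm of~\cite{gao2022fully}, which a.a.s.\ absorbs the remaining $\eps n$ vertices (with $\eps = \eps(k) = \sqrt{\log k/k} = o(1)$) into the path and then closes it into a Hamiltonian cycle in $\bigo(\sqrt{\eps}\, n + n^{3/4}\log^2 n) = \bigo(n \sqrt{\log k/k}) + o(n)$ further steps. Summing the two phases gives a total of $n(1 + \bigo(\sqrt{\log k/k}))$ rounds a.a.s., hence $\tau_{\texttt{HAM}}(k) \le 1 + \bigo(\sqrt{\log k/k})$, as claimed. The main obstacle is purely bookkeeping: making the "$Z$ decreases by $2$ on average" claim precise when the algorithm sometimes grows a matching rather than the path directly, and ensuring the leftover matched vertices are genuinely $o(n)$ so the clean-up hypotheses apply; neither is deep, and both follow the template already used in Theorem~\ref{thm:largek_pm} and the large-$k$ results of Section~\ref{sec:degree}.
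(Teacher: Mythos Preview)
Your overall plan is right, but there is a concrete arithmetic slip that, as written, does not deliver the stated bound. You set $\eps = \sqrt{\log k/k}$ for the fraction of leftover vertices after phase~1 and then invoke the clean-up of~\cite{gao2022fully}, which costs $\bigo(\sqrt{\eps}\,n)$ rounds. But $\sqrt{\eps} = (\log k/k)^{1/4}$, not $(\log k/k)^{1/2}$, so your two-phase total is only $n\bigl(1 + \bigo((\log k/k)^{1/4})\bigr)$, strictly weaker than the theorem. The fix is to prove the stronger bound $Z(n) \le C n\log k/k$ in phase~1 (this is what your contradiction argument actually yields once the bookkeeping is done: with $m = n\log k/k$ the per-round failure probability is already $\le 1/k$, and tracking the potential $2X(t)+Y(t)$, which increases by~$2$ in every non-pass round, gives $Z(n) = Y(n)+U(n) \le 2c = \bigo(n/k)$). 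Then $\eps = \bigo(\log k/k)$ and the clean-up genuinely costs $\bigo(n\sqrt{\log k/k})$.

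Separately, the paper's route is considerably simpler than yours: it drops the matching entirely and in phase~1 just extends the path by one vertex whenever any square lands off the path (put the circle on a path endpoint). Then $Z(t)$ decreases by~$1$ in every non-pass round, the analysis is literally the PM argument from Theorem~\ref{thm:largek_pm} with the threshold $n\log k/k$, and one gets $Z(n) \le n\log k/k$ with no potential-function bookkeeping and none of the ``matched-but-not-absorbed'' subtlety you flag. Your matching-based variant is not wrong, but it buys nothing here and creates exactly the obstacles you yourself identify.
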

\begin{proof}
The proof is almost the same as the proof of Theorem~\ref{thm:largek_pm}. The lower bound is trivial: one cannot create a Hamilton cycle in less than $n$ rounds. 

As before, during the first phase that lasts for $n$ steps, a greedy algorithm is used that extends a path whenever at least one square lands on a vertex that is not on the path. At the end of this phase, a.a.s.\ a path of length at least $n - n \log k / k$ is created.

During the second phase, another clean-up algorithm can be used (analyzed in~\cite{gao2022fully}) to finish the job and to absorb the remaining $\eps = \eps(k) = \log k / k$ fraction of vertices. It was proved in~\cite{gao2022fully} that a.a.s.\ this algorithm takes $\bigo ( \sqrt{\eps} n) = \bigo ( n \sqrt{ \log k / k } )$ steps, which finishes the proof of the theorem.
\end{proof}

\bibliographystyle{plain}

\bibliography{refs.bib}

\end{document}